\documentclass[11pt]{amsart}
\usepackage[latin9]{inputenc}
\usepackage{geometry}
\geometry{verbose,tmargin=1.5cm,bmargin=1.5cm,lmargin=3cm,rmargin=3cm}
\usepackage{units}
\usepackage{mathrsfs}
\usepackage{amsthm}
\usepackage{amstext}
\usepackage{amssymb}
\usepackage{graphicx}
\PassOptionsToPackage{normalem}{ulem}
\usepackage{ulem}

\makeatletter
\numberwithin{equation}{section}
\numberwithin{figure}{section}
\usepackage{enumitem}		
\theoremstyle{plain}
\newtheorem{thm}{\protect\theoremname}[section]
  \theoremstyle{definition}
  \newtheorem{defn}[thm]{\protect\definitionname}
  \theoremstyle{plain}
  \newtheorem{prop}[thm]{\protect\propositionname}
  \theoremstyle{remark}
  \newtheorem{rem}[thm]{\protect\remarkname}
  \theoremstyle{plain}
  \newtheorem{cor}[thm]{\protect\corollaryname}
  \theoremstyle{plain}
  \newtheorem{lem}[thm]{\protect\lemmaname}


\usepackage{amsthm}\usepackage{bm}
\usepackage{mathrsfs}





\makeatother

  \providecommand{\corollaryname}{Corollary}
  \providecommand{\definitionname}{Definition}
  \providecommand{\lemmaname}{Lemma}
  \providecommand{\propositionname}{Proposition}
  \providecommand{\remarkname}{Remark}
\providecommand{\theoremname}{Theorem}

\begin{document}
\global\long\def\la{\lambda}

\global\long\def\Cr{\mathscr{C}\left(f\right)}

\global\long\def\Min{\mathscr{M}_{-}\left(f\right)}

\global\long\def\Max{\mathscr{M}_{+}\left(f\right)}

\global\long\def\Sd{\mathscr{S}\left(f\right)}

\global\long\def\Xt{\mathscr{X}\left(f\right)}

\global\long\def\Nd{N\left(f\right)}

\global\long\def\Z{\mathbb{Z}}

\global\long\def\R{\mathbb{R}}

\global\long\def\De{\Delta}

\global\long\def\na{\nabla}

\title{Topological Properties of Neumann Domains}

\author{{Ram Band and David Fajman}}

\subjclass[2000]{35Pxx, 57M20}
\begin{abstract}
A Laplacian eigenfunction on a two-dimensional manifold dictates some
natural partitions of the manifold; the most apparent one being the
well studied nodal domain partition. An alternative partition is revealed
by considering a set of distinguished gradient flow lines of the eigenfunction
- those which are connected to saddle points. These give rise to Neumann
domains. We establish complementary definitions for Neumann domains
and Neumann lines and use basic Morse homology to prove their fundamental
topological properties. We study the eigenfunction restrictions to
these domains. Their zero set, critical points and spectral properties
allow to discuss some aspects of counting the number of Neumann domains
and estimating their geometry.
\end{abstract}

\keywords{Neumann domains, Neumann lines, nodal domains, Laplacian eigenfunctions,
Morse-Smale complexes}

\maketitle

\section{Introduction}

\noindent Topological properties of Laplacian eigenfunctions on domains
and manifolds are of essential interest to mathematical physics in
recent years \cite{Proc_Kavli,Proc_Wittenberg}. Nodal patterns of
eigenfunctions are a major and well developed research area in this
field. Nodal sets of eigenfunctions have been studied with respect
to their volume \cite{Bruening_mzs78,BruGro_mzs72,ColMin_cmp11,DonFef_im88,HezSog_apde12}
and geometry \cite{BouRud_ahp11,BouRud_inv11} and nodal domains of
eigenfunctions have been studied with respect to their count \cite{BluGnuSmi_prl02,BrFa12,Courant23,GSS05,Klawonn09}
and metric properties \cite{Mangoubi_cpde08,Mangoubi_cmb08,Mangoubi_jta10}.
The study of related notions, called \emph{Neumann lines} and \emph{Neumann
domains} has been recently suggested in two independent works by Zelditch
\cite{Zel_sdg13} and McDonald, Fulling \cite{McDFul_ptrs13}. Neumann
lines and Neumann domains form a partition of the manifold, dictated
by the eigenfunction. The current paper is dedicated to the investigation
of those partitions from topological, geometric and spectral perspectives.
We note that Neumann domains are studied in computational topology
and computer graphics, where they are known as Morse-Smale complexes
and used for applications such as surface segmentation (see \cite{Zomorodian_05}
and references within).

\subsection{\noindent Preliminaries}

\noindent Let $M$ be a 2-dimensional, connected, compact and orientable
surface without boundary with a smooth Riemannian metric $g$ and
let $\Delta_{g}$ be the Laplace-Beltrami operator of $g$. Consider
the eigenvalue problem
\begin{equation}
-\Delta_{g}f=\lambda f.
\end{equation}
We assume in the following that the eigenfunctions $f$ are Morse
functions, i.e.\,have no degenerate critical points. We call such
an $f$ a \emph{Morse-eigenfunction}. In fact, for generic metrics
eigenfunctions are in this class, as shown in \cite{Uhl_ajm76}. The
smooth gradient vector field, $\nabla f$, defines a smooth flow,
$\varphi$, along the integral curves of $-\nabla f$:
\begin{equation}
\begin{aligned} & \varphi:\mathbb{R}\times\,M\rightarrow M,\\
 & \partial_{t}\varphi(t,\,x)=-\nabla f\big|_{\varphi(t,\,x)},\\
 & \varphi(0,\,x)=x.
\end{aligned}
\label{eq:flow}
\end{equation}

\noindent We introduce the following notations. Let $\Cr$ denote
the set of critical points of $f$, $\Sd$ and $\Xt$ the sets of
saddle points and extrema, respectively. In addition, let $\Min$
and $\Max$ denote the sets of minima and maxima of $f$, respectively.

\noindent For a critical point $x\in\Cr$, we denote by $\la_{x}$
its index (the number of negative eigenvalues of the Hessian of $f$
at $x$) and define its stable and unstable manifolds by
\begin{equation}
\begin{aligned}W^{s}(x) & =\{y\in M\big|\lim_{t\rightarrow\infty}\varphi(t,\,y)=x\}\mbox{ and }\\
W^{u}(x) & =\{y\in M\big|\lim_{t\rightarrow-\infty}\varphi(t,\,y)=x\},
\end{aligned}
\end{equation}
respectively. Finally, we recall the following relevant definition.
A \emph{Morse-Smale function} is a Morse function, which in addition
fulfills the Morse-Smale transversality condition, saying that stable
and unstable manifolds intersect transversely (cf.~\cite{BanHur_MorseHomology04}).
In two dimensions the Morse-Smale transversality condition is equivalent
to the condition that there are no two saddle points which are connected
by gradient flow lines. The definition of Neumann domains (definition
\ref{def:ND_using_CW_complex}) already appears in the Morse homology
literature (see e.g., \cite{BanHur_MorseHomology04}). There it is
assumed that the function is Morse-Smale in order to obtain some basic
properties of Neumann domains. However, as there exist eigenfunctions
which are not Morse-Smale, we do not adopt this assumption. Not assuming
this forbids us from relying on existing results which could have
simplified our proofs.

\subsection{Definitions and main results}

In this section we assume $f$ to be a general Morse function. Yet,
some of the results are specialized for Morse eigenfunctions, which
are in the focus of this paper. The following definition is motivated
by Zelditch \cite{Zel_sdg13}.
\begin{defn}
\noindent \label{def:ND_using_CW_complex}A \emph{Neumann domain}
is a connected component of the set
\begin{equation}
\Omega_{p,q}\left(f\right)=W^{s}\left(p\right)\cap W^{u}\left(q\right),
\end{equation}
where $p\in\Min,\,\,q\in\Max$.
\end{defn}
\noindent In the following we omit the indices and denote a Neumann
domain by $\Omega$. The next definition owes to the recent paper
of McDonald and Fulling \cite{McDFul_ptrs13}. We allow a certain
modification of the definition to adapt it to the present approach.
\begin{defn}
\noindent \label{def:ND_using_saddles} The \emph{Neumann line set}
of $f$ is
\begin{equation}
\Nd:=\overline{\bigcup_{{r\in\Sd}}W^{s}(r)\cup W^{u}(r)}.
\end{equation}

\end{defn}
\begin{figure}
\begin{centering}
\hspace{-30mm}\includegraphics[bb=-30bp 0bp 1440bp 785bp,scale=0.18]{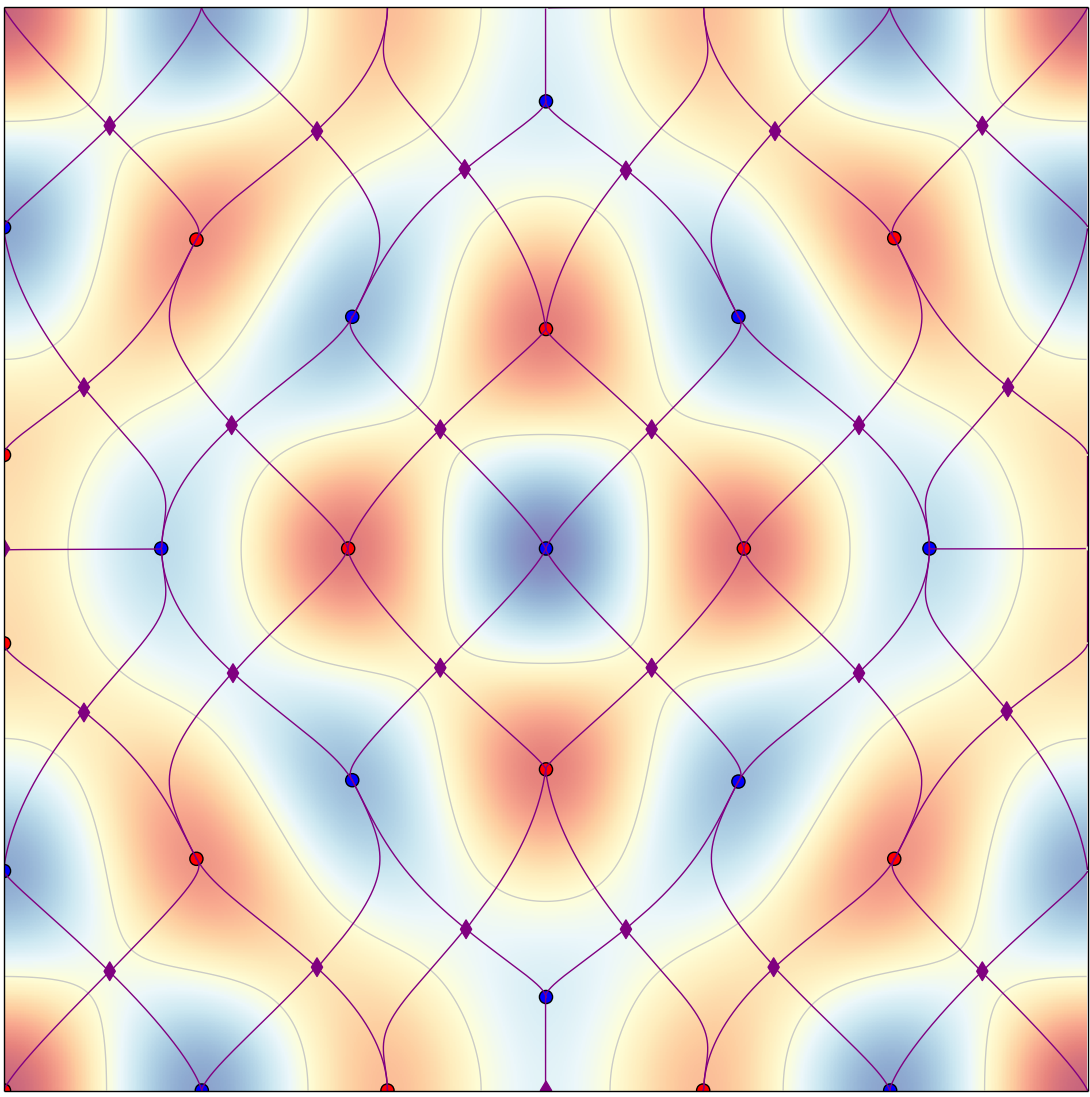}\hspace{-20mm}\includegraphics[bb=-30bp 0bp 1440bp 785bp,scale=0.18]{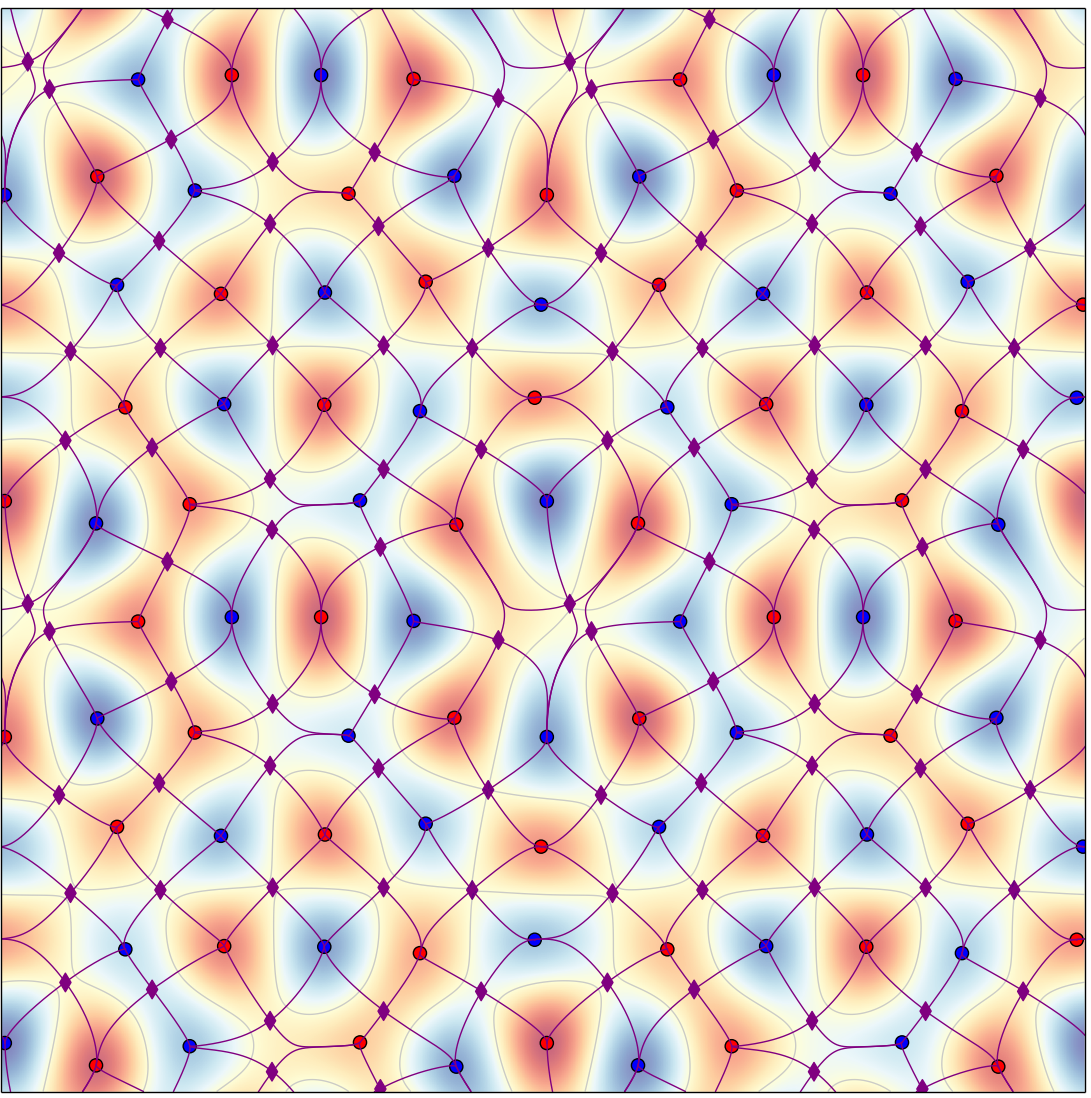}\hspace{-70mm}
\par\end{centering}

\protect\caption{Two eigenfunctions of the eigenvalues $52\pi^{2}$ (left) and $200\pi^{2}$
(right) on the unit flat torus $\mathbb{T}=[0,1]\times[0,1]$. The
nodal domains are colored red and blue and the nodal lines are indicated
by grey lines. Red (blue) circles mark maxima (minima) and purple
diamonds indicate saddle points. The solid lines are the Neumann lines.
\label{fig:eigenfunctions_on_torus}}
\end{figure}


\noindent Figure \ref{fig:eigenfunctions_on_torus} demonstrates the
definitions above by showing the Neumann lines and the Neumann domains
for two eigenfunctions on the flat torus. The next proposition states
that Neumann lines and Neumann domains define a partition of $M$,
assuming that the set of saddle points is not empty.
\begin{prop}
\noindent \label{prop:complementary_property_no_boundary} If $\Nd\neq\emptyset$
then the following disjoint decomposition of the manifold holds.
\begin{equation}
M=\bigsqcup_{{{p\in\Min}\atop {q\in\Max}}}\left\{ \Omega_{p,q}(f)\right\} \,\,\bigsqcup\,\,\Nd
\end{equation}

\end{prop}
The first main result concerns the topological properties of Neumann
domains on closed surfaces.
\begin{thm}
\label{thm:topological_properties_no_boundary} Let $M$ be a smooth,
compact, two dimensional, orientable manifold without boundary and
$g$ a smooth Riemannian metric on $M$. Let $f$ be a Morse function
with $\Sd\neq\emptyset$. Let $p\in\Min,\,q\in\Max$ and $\Omega$
be a connected component of $W^{s}\left(p\right)\cap W^{u}\left(q\right)$,
i.e., $\Omega$ is a Neumann domain. The following properties hold.\global\long\def\theenumi{\roman{enumi}}

\subparagraph*{\uline{Critical points location}\global\long\def\theenumi{\roman{enumi}}
}
\begin{enumerate}[resume]
\item $\Cr\subset\Nd$\label{enu:thm-no-bdry-critical-points}
\item $\Xt\cap\partial\Omega=\{p,q\}$\label{enu:thm-no-bdry-extremal-points}
\item If $f$ is in addition a Morse-Smale function then $\partial\Omega$
consists of Neumann lines connecting saddle points with extrema. In
particular, the boundary, $\partial\Omega$, contains either one or
two saddle points. \label{enu:thm-no-bdry-Morse-Smale}
\end{enumerate}

\subparagraph*{\uline{Neumann domain topology}\global\long\def\theenumi{\roman{enumi}}
}
\begin{enumerate}[resume]
\item $\Omega$ is a simply connected open set.\label{enu:thm-no-bdry-simply-connected}
\end{enumerate}

\subparagraph*{\uline{Level sets of $\left.f\right|_{\Omega}$}\global\long\def\theenumi{\roman{enumi}}
}

Let $c\in\left(f\left(p\right),f\left(q\right)\right)\subset\R$.
\begin{enumerate}[resume]
\item $\Omega\cap f^{-1}\left(c\right)\neq\emptyset$\label{enu:thm-no-bdry-level-sets-1}
\item Each connected component of $\overline{\Omega}\cap f^{-1}\left(c\right)$
has a non empty intersection with $\partial\Omega$.\label{enu:thm-no-bdry-level-sets-2}
\item $\overline{\Omega}\cap f^{-1}\left(c\right)$ is an embedding of a
closed one dimensional interval, without self-intersections, and it
intersects $\partial\Omega$ only at its two endpoints. .\label{enu:thm-no-bdry-level-sets-3}
\end{enumerate}
\end{thm}
We conclude that all Neumann domains are simply connected, which is
a fundamental difference to nodal domains. Moreover, all critical
points are located on the Neumann lines and the boundary of each Neumann
domain contains precisely one minimum and one maximum. Although the
theorem is stated for general Morse functions, in the sequel we apply
the theorem to Morse-eigenfunctions. Under this further assumption,
the nodal set plays an important role as we clarify below.
\begin{rem}
\label{rem:nodal_set_of_Neumann_domain} The maxima of an eigenfunction
are positive and its minima are negative. Therefore, applying theorem
\ref{thm:topological_properties_no_boundary} to a Morse-eigenfunction,
we may choose the value $c=0$ in \eqref{enu:thm-no-bdry-level-sets-1},
\eqref{enu:thm-no-bdry-level-sets-2}, \eqref{enu:thm-no-bdry-level-sets-3}
of the theorem for all Neumann domains. This yields that the intersection
of a Neumann domain with the nodal set is a non-self-intersecting
curve touching the Neumann domain boundary at two endpoints.
\begin{figure}[htbp]
\begin{centering}
\includegraphics[scale=0.28]{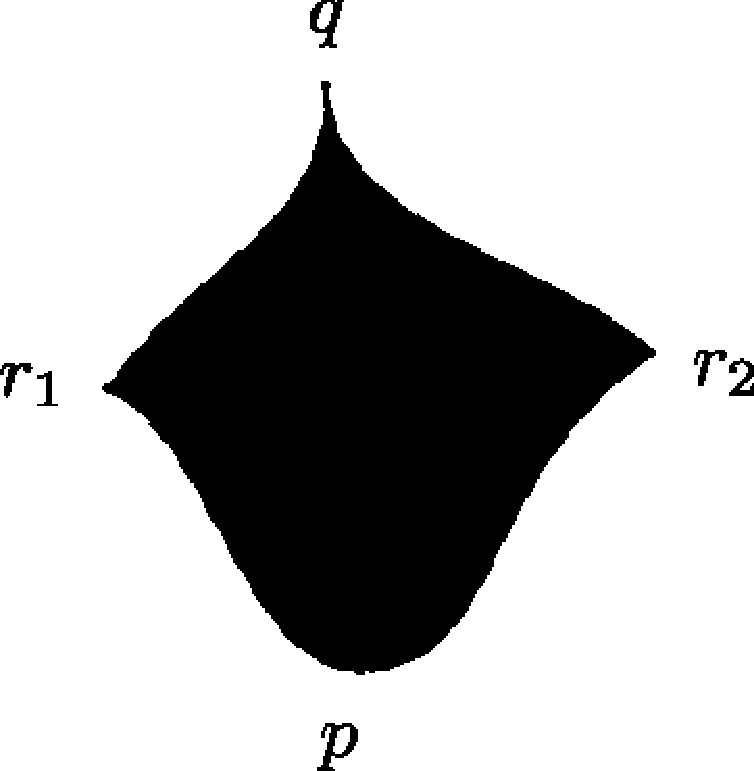}
\par\end{centering}

\protect\caption{The topological structure of a Neumann domain of a Morse-Smale eigenfunction.
The dashed lines mark the gradient flow lines forming the boundary
of the Neumann domain and the solid line marks the nodal line of the
eigenfunction restricted to the Neumann domain. The eigenfunction
critical points on the boundary are indicated by $q$, $p$, $r_{1}$,
$r_{2}$ (maximum, minimum and saddle points, respectively).}

\label{fig-gen str ND}
\end{figure}

\end{rem}
The generic structure of Neumann domains for a Morse-Smale eigenfunctions
which results from theorem \ref{thm:topological_properties_no_boundary}
is displayed in figure \ref{fig-gen str ND}. The theorem, in particular,
allows to bound the number of Neumann domains in terms of the nodal
domain count.
\begin{cor}
\label{cor:Neumann_count_bdd_by_nodal_count}

Let $(M,g)$ be as in theorem \ref{thm:topological_properties_no_boundary}
and $f$ a Morse-eigenfunction on $M$. Let $\mu$ denote the number
of Neumann domains of $f$ and $\nu$ denote the number of its nodal
domains. Then $2\mu\geq\nu$.

\end{cor}
\noindent We proceed by discussing a fundamental spectral property
of Neumann domains. By proposition \ref{prop:complementary_property_no_boundary}
the boundary of a Neumann domain $\Omega$ of an eigenfunction $f$
consists of Neumann lines, which are particular gradient flow lines,
$\left\{ \varphi\left(t,x\right)\right\} _{t\in\R}$ (see for instance
in proof of lemma \ref{lem:boundary_of_stble_mnfld_of_saddle}). Hence
the normal derivative of $f$ at $\partial\Omega$ vanishes. We conclude
that $\left.f\right|_{\Omega}$ is an eigenfunction on $\Omega$ with
Neumann boundary conditions. Hence the name Neumann domains, which
was coined in \cite{McDFul_ptrs13}. A natural question concerns the
position of $\left.f\right|_{\Omega}$ in the spectrum of $\Omega$.
For a nodal domain, $D$, the answer to this question is trivial,
as $\left.f\right|_{D}$ corresponds to the first eigenvalue in the
Dirichlet spectrum of $D$. This observation is a key ingredient in
various nodal domain count results, a fundamental of which is Pleijel's
\cite{Pleijel56}. Similar results for the Neumann domain count may
be obtained from estimating the position of $\left.f\right|_{\Omega}$
in the Neumann spectrum of $\Omega$. Theorem \ref{thm:topological_properties_no_boundary}
\eqref{enu:thm-no-bdry-critical-points}, \eqref{enu:thm-no-bdry-extremal-points},
\eqref{enu:thm-no-bdry-simply-connected}, \eqref{enu:thm-no-bdry-level-sets-3}
suggest that $\left.f\right|_{\Omega}$ cannot have too rich a structure.
This may lead to conjecture that there exists a positive $n\in\mathbb{N}$
such that for every Neumann domain $\Omega$, the restricted eigenfunction,
$\left.f\right|_{\Omega}$, is at most the $n$-th eigenfunction (of
the restricted eigenproblem). Indeed, in \cite{Zel_sdg13} it is suggested
that ``possibly it is 'often' the first non-constant Neumann eigenfunction''.
The following proposition constitutes a counter-example. For a domain
$\Omega$ and an eigenfunction $f$ on $\Omega$ satisfying Neumann
boundary conditions, we denote by $\textrm{pos}\left(f,\Omega\right)$
the position of $f$ in the spectrum of $\Omega$. We set the position
of the trivial constant function to be $\textrm{pos}\left(\textrm{const},\Omega\right)=0$
and for degenerate eigenvalues the position is chosen to be the minimal
one.
\begin{prop}
\label{prop:unbounded_spectral_position} Let $\mathbb{T}$ be the
standard flat two-dimensional torus. There exists a sequence $\left\{ f_{k}\right\} _{k=1}^{\infty}$
of Laplacian eigenfunctions on $\mathbb{T}$ with $\left\{ \Omega_{k}\right\} _{k=1}^{\infty}$
a sequence of Neumann domains, $\Omega_{k}$ being a Neumann domain
of $f_{k}$, such that the sequence $\left\{ \textrm{pos}(\left.f_{k}\right|_{\Omega_{k}},\Omega_{k})\right\} _{k=1}^{\infty}$
is unbounded.
\end{prop}
\noindent The paper is structured as following. In the next section
we treat manifolds without boundary and prove proposition \ref{prop:complementary_property_no_boundary}
and theorem \ref{thm:topological_properties_no_boundary}. In section
\ref{sec:with_boundary} we prove analogous results for manifolds
with Dirichlet boundary (proposition \ref{prop:complementary_property_with_boundary}
and theorem \ref{thm-2}). In section \ref{sec:geom_spec_properties}
we present some geometric and spectral properties of Neumann domains;
we estimate the diameter of Neumann domains (theorem \ref{thm-lower_bound_on_outer_radius}),
prove proposition \ref{prop:unbounded_spectral_position} and discuss
the counter-example which stands behind it. Finally, in section \ref{sec:number_of_Neumann_domains}
we prove corollary \ref{cor:Neumann_count_bdd_by_nodal_count} and
its analogue for the boundary case (corollary \ref{cor:Neumann_count_bdd_by_nodal_count-for_boundary}),
and relate the number of Neumann domains to the number of critical
points and their degrees.

\section{The structure of Neumann lines and Neumann domains - manifolds without
boundary\label{sec:no_boundary}}

\subsection{Some basics in Morse theory}

\noindent A fundamental theorem in Morse homology is the stable/unstable
manifold theorem, part of which we quote here.
\begin{lem}
{[}part of theorem 4.2 in \cite{BanHur_MorseHomology04}{]} \label{lem:stable_mnfold_are_open_sets}
Let $f$ be a real Morse function on an $m-$dimensional, compact
Riemannian manifold and let $x$ be a critical point of $f$. The
stable and unstable manifolds of $x$ are smoothly embedded open disks
of dimension $m-\la_{x}$ and $\la_{x}$, respectively, where $\lambda_{x}$
is the Morse index of $f$ at $x$.
\end{lem}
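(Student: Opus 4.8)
The plan is to recognize the gradient flow $\varphi$ as the flow of the smooth vector field $X=-\nabla f$, whose equilibria are exactly the critical points of $f$, and then to invoke the classical stable/unstable manifold theorem for hyperbolic fixed points. First I would linearize $X$ at the critical point $x$: in local coordinates the differential of $X$ at $x$ is $-\mathrm{Hess}_f(x)$ (computed with respect to $g$). The Morse hypothesis says $\mathrm{Hess}_f(x)$ is nondegenerate, so all its eigenvalues are nonzero, and by definition of the index exactly $\la_x$ of them are negative and $m-\la_x$ positive. Hence $-\mathrm{Hess}_f(x)$ has no eigenvalue on the imaginary axis, $x$ is a hyperbolic equilibrium, and the linearization has an unstable subspace $E^u$ of dimension $\la_x$ (the positive eigenvalues of $-\mathrm{Hess}_f(x)$, i.e. the negative directions of the Hessian) and a stable subspace $E^s$ of dimension $m-\la_x$.

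Second, I would apply the local stable/unstable manifold theorem (Hadamard--Perron). This furnishes, in a neighborhood $U$ of $x$, smoothly embedded disks $W^s_{\mathrm{loc}}(x)$ and $W^u_{\mathrm{loc}}(x)$ tangent at $x$ to $E^s$ and $E^u$, of dimensions $m-\la_x$ and $\la_x$ respectively, consisting precisely of the points whose forward (resp. backward) trajectory stays in $U$ and converges to $x$. Each is the graph of a smooth map over the corresponding eigenspace, hence diffeomorphic to an open ball of the stated dimension.

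Third, I would globalize. Since $M$ is compact the flow is complete, and $y\in W^s(x)$ if and only if its forward trajectory eventually enters $W^s_{\mathrm{loc}}(x)$; therefore $W^s(x)=\bigcup_{t\ge 0}\varphi(-t,\,W^s_{\mathrm{loc}}(x))$, a nested union of diffeomorphic images of the local disk under the diffeomorphisms $\varphi(-t,\cdot)$. Choosing $W^s_{\mathrm{loc}}(x)$ to be a trapping disk (invariant under $\varphi(t,\cdot)$ for $t>0$ and shrinking to $x$ as $t\to\infty$), the exit map along the flow identifies $W^s(x)\setminus\{x\}$ with $S^{m-\la_x-1}\times\R$, so that $W^s(x)$ is a smooth submanifold diffeomorphic to $\R^{m-\la_x}$, i.e. an open disk of dimension $m-\la_x$; the argument for $W^u(x)$ is identical with time reversed.

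I expect the globalization step to be the main obstacle, specifically verifying that the global stable manifold is genuinely embedded rather than merely injectively immersed. This is where the gradient structure is essential: $f$ strictly decreases along every nonconstant trajectory, which rules out periodic orbits and homoclinic recurrence and supplies the monotone ``height'' that lets the nested flow-out of the local disk be organized into a single global chart without self-accumulation. For this point I would rely on the full statement of theorem 4.2 in \cite{BanHur_MorseHomology04}, the local manifold theorem and the absence of recurrence for gradient flows being the two inputs that force the embedded-disk structure and the dimension count to come out exactly as claimed.
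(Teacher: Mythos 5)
The paper does not actually prove this lemma: it is quoted verbatim as ``part of theorem 4.2 in \cite{BanHur_MorseHomology04}'' and used as a black box, so there is no internal proof to compare against. Your sketch reconstructs what is essentially the proof given in that reference, and it is the standard one: the critical point is a hyperbolic equilibrium of $-\nabla f$ because the Morse condition makes the linearization $-\mathrm{Hess}_f(x)$ nondegenerate, the Hadamard--Perron theorem produces local stable/unstable disks of dimensions $m-\la_{x}$ and $\la_{x}$ tangent to the corresponding spectral subspaces, and the global manifolds are obtained by flowing the local disks out, $W^{s}(x)=\bigcup_{t\ge0}\varphi(-t,W^{s}_{\mathrm{loc}}(x))$, using completeness of the flow on the compact $M$. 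You also correctly identify the genuine crux, which casual treatments miss: the flow-out is automatically an injectively immersed disk, but embeddedness requires the gradient structure (monotonicity of $f$ along trajectories, hence no recurrence) to prevent the far reaches of $W^{s}(x)$ from accumulating back onto itself.

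One point to flag: your resolution of that crux is circular as stated. You write that for the embeddedness step you ``would rely on the full statement of theorem 4.2 in \cite{BanHur_MorseHomology04}'' --- but the embedded-disk conclusion \emph{is} theorem 4.2; invoking it there assumes exactly what is being proved. To close the gap honestly you would finish the argument yourself, e.g.\ by parametrizing $W^{s}(x)\setminus\{x\}$ by level sets of $f$: choosing $c>f(x)$ below the next critical value, every trajectory in $W^{s}(x)\setminus\{x\}$ meets each level in $(f(x),c]$ exactly once, and if $q_{n}\in W^{s}(x)$ converges in $M$ to $q\in W^{s}(x)$, then $f(q_{n})\to f(q)$ forces convergence in the intrinsic (flow-time $\times$ sphere) coordinates, since a point of $W^{s}(x)$ close to $q$ in $M$ but intrinsically far away would have to complete an excursion away from and back to a neighborhood of the trajectory of $q$, costing a definite decrease of $f$ that the bound $f\ge f(x)$ on $W^{s}(x)$ does not permit. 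That said, measured against the paper itself --- which simply cites the result --- your proposal does strictly more than the paper requires, and its architecture is the right one.
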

\noindent In two dimensions the non-degenerate critical points are
maxima, minima and saddle points. According to the lemma above the
stable manifold of a maximum $q\in M$ is $\{q\}$, and the unstable
manifold is the embedding of a two dimensional open disk. The converse
holds for a minimum. For a saddle point, $r\in M$, the stable and
unstable manifolds are embeddings of open one dimensional intervals.
Another useful tool is the decomposition of the manifold $M$ into
a union of stable (or unstable) manifolds.
\begin{lem}
{[}Proposition 4.22 in \cite{BanHur_MorseHomology04}{]}\label{lem:decomp_to_stble_mnfolds}
Let $f:M\rightarrow\mathbb{R}$ be a Morse function on a compact,
smooth, closed Riemannian manifold $(M,g)$, then $M$ is a disjoint
union of the stable manifolds of $f$, i.e.\,
\begin{equation}
M=\bigsqcup_{x\in\Cr}W^{s}(x).
\end{equation}
Similarly,
\begin{equation}
M=\bigsqcup_{x\in\Cr}W^{u}(x).
\end{equation}

\end{lem}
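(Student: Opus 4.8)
The plan is to establish the two halves of the first statement---disjointness of the stable manifolds and the fact that they cover $M$---separately, and then to deduce the unstable decomposition by a symmetry argument.

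Disjointness is the easy half. If $y\in W^{s}(x_{1})\cap W^{s}(x_{2})$ for critical points $x_{1},x_{2}\in\Cr$, then by definition $\lim_{t\rightarrow\infty}\varphi(t,\,y)=x_{1}$ and $\lim_{t\rightarrow\infty}\varphi(t,\,y)=x_{2}$; since $M$ is Hausdorff, limits are unique, whence $x_{1}=x_{2}$. Thus the stable manifolds are pairwise disjoint, and it remains only to verify that every point lies in one of them.

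For the covering, I would exploit that $f$ is a Lyapunov function for the negative gradient flow. Differentiating $f$ along the flow gives $\frac{d}{dt}f(\varphi(t,\,y))=\langle\nabla f,\partial_{t}\varphi\rangle=-\left|\nabla f\right|^{2}\leq0$, with equality exactly at critical points. Hence $t\mapsto f(\varphi(t,\,y))$ is non-increasing, and as $M$ is compact and $f$ bounded below, it converges to a finite limit as $t\rightarrow\infty$. Compactness of $M$ also ensures the flow is complete and the forward orbit is precompact, so its $\omega$-limit set $\omega(y)$ is nonempty, compact, connected and $\varphi$-invariant. The heart of the argument is to identify $\omega(y)$: on it $f$ equals the limiting value above, so $f$ is constant there, and combining this with invariance (a LaSalle-type argument) shows that along any orbit inside $\omega(y)$ one has $\frac{d}{dt}f=-\left|\nabla f\right|^{2}=0$, forcing $\omega(y)\subseteq\Cr$. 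Because $f$ is Morse, its critical points are nondegenerate, hence isolated, and therefore finite by compactness; a connected subset of a finite discrete set is a single point. Consequently $\omega(y)=\{x\}$ for some $x\in\Cr$, which means $\varphi(t,\,y)\rightarrow x$ and $y\in W^{s}(x)$.

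The main obstacle is precisely this identification of $\omega(y)$ with a single critical point rather than merely with the full critical set: it is here that the connectedness of $\omega(y)$ together with the isolation of critical points (a consequence of the Morse hypothesis) is indispensable, and one must also invoke compactness to rule out the orbit failing to accumulate. Finally, the unstable decomposition follows by applying the stable-manifold statement to $-f$: passing from $f$ to $-f$ reverses the gradient flow, interchanging $W^{s}$ and $W^{u}$ while leaving $\Cr$ unchanged, so $M=\coprod_{x\in\Cr}W^{u}(x)$ is immediate.
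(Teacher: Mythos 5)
Your proof is correct. Note, though, that the paper does not prove this lemma at all: it is quoted verbatim as Proposition 4.22 of \cite{BanHur_MorseHomology04}, so there is no in-paper argument to compare against. What you wrote is the standard proof and matches the route of the cited source and of the paper's own toolkit: your covering step (monotonicity of $f$ along the flow, precompact forward orbits, the LaSalle identification of the $\omega$-limit set with a single critical point via connectedness and the isolation of Morse critical points) is exactly the content of lemma \ref{lem:flow_lim_is_critic_pt}, which the paper also quotes without proof (Proposition 3.19 in \cite{BanHur_MorseHomology04}); disjointness is uniqueness of limits in a Hausdorff space; and the unstable decomposition follows, as you say, by applying the stable statement to $-f$.
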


\subsection{Proofs of lemmata}

Throughout this section we assume that $M$ is a 2-dimensional compact,
orientable surface without boundary and $f$ is a Morse function on
$M$. As saddle points play a major role in defining the Neumann line
set, $\Nd$ (see definition \ref{def:ND_using_saddles}), it is useful
in what follows to understand the local behavior of $\Nd$ in the
vicinity of saddle points. The following lemma summarizes results
of that kind, some of which appear in \cite{McDFul_ptrs13}.
\begin{lem}
\label{lem:local_Neumann_set_near_saddle}

Let $M$ and $f$ be as above and let $r\in\Sd$. Then
\begin{enumerate}
\item \label{enu:lem_local_Neumann_near_saddle-1}There exists a neighborhood
$U$ of $r$ such that $\Nd\cap U$ consists of four curves which
meet with right angles at $r$.
\item \label{enu:lem_local_Neumann_near_saddle-2}There exists a neighborhood
$V$ of $r$ such that the previous claim holds in $V$ and in addition,
$f^{-1}\left(f\left(r\right)\right)\cap V$ consists of four curves
which meet at $r$ and interlace with the four curves $\Nd\cap U$.
\end{enumerate}
\end{lem}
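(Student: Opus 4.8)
The plan is to work in Morse normal coordinates near the saddle point and then argue that the qualitative picture is coordinate-independent. By the Morse lemma, since $f$ is a Morse function and $r$ is a non-degenerate saddle (index $1$), there exist local coordinates $(x,y)$ centered at $r$ in which $f(x,y)=f(r)+x^{2}-y^{2}$. First I would establish the structure of the gradient flow lines that are attached to $r$, namely the local pieces of $W^{s}(r)$ and $W^{u}(r)$. The stable and unstable manifolds of a saddle are one-dimensional by lemma \ref{lem:stable_mnfold_are_open_sets}, and in the Morse chart they enter $r$ along the eigendirections of the Hessian: the unstable manifold $W^{u}(r)$ is tangent to the $y$-axis (the negative-eigenvalue direction, along which $f$ decreases) and the stable manifold $W^{s}(r)$ is tangent to the $x$-axis. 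Together these four half-curves form $\Nd\cap U$. The key point for claim \eqref{enu:lem_local_Neumann_near_saddle-1} is that the $x$- and $y$-axes are mutually orthogonal; but one must be careful because the Morse coordinates need not be orthonormal with respect to $g$, so the right-angle statement is about the actual Riemannian metric, not the Euclidean picture.

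To handle this I would choose the Morse chart more carefully. The Hessian of $f$ at $r$ and the metric $g$ at $r$ are two symmetric bilinear forms on $T_rM$; by simultaneous diagonalization one can pick linear coordinates that are $g$-orthonormal at $r$ and in which the Hessian is diagonal, so that after applying the Morse lemma one obtains coordinates which are orthonormal at the point $r$ and in which $f=f(r)+x^2-y^2$ to leading order. The stable and unstable directions at $r$ are then the coordinate axes, which are $g$-orthogonal at $r$. Since the tangent lines to the four curves of $\Nd$ at $r$ are exactly these axes (two for $W^{s}$, two for $W^{u}$), the curves meet pairwise at right angles with respect to $g$. This gives claim \eqref{enu:lem_local_Neumann_near_saddle-1}, with $U$ taken small enough that the flow lines through $r$ stay within a single Morse chart and do not return.

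For claim \eqref{enu:lem_local_Neumann_near_saddle-2} I would analyze the nodal set $f^{-1}(f(r))$ in the same chart. In the normal form $f-f(r)=x^{2}-y^{2}=(x-y)(x+y)$, so the local level set $\{f=f(r)\}$ is exactly the pair of diagonal lines $y=\pm x$, which again form four half-curves emanating from $r$. Comparing with the axes, the diagonals bisect the angles between the coordinate axes, so the four nodal curves interlace the four Neumann curves, alternating around $r$. I would then note that this normal-form picture is only the leading order, so the actual nodal set is a $C^{1}$-small perturbation of the diagonals; because the four tangent directions are distinct from the four Neumann tangent directions (the diagonals are rotated by $45^\circ$ from the axes at $r$), the interlacing is stable under this perturbation in a possibly smaller neighbourhood $V\subseteq U$.

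The main obstacle I expect is reconciling the Euclidean normal-form computation with the Riemannian geometry: the gradient flow in \eqref{eq:flow} is the $g$-gradient, so the integral curves of $-\nabla f$ are not simply the curves of steepest descent in the flat Morse coordinates, and the metric distorts angles away from $r$. The cleanest route is to make all statements about tangent data \emph{at the point $r$}, where one can arrange the chart to be $g$-orthonormal, and then invoke smooth dependence and the stable/unstable manifold theorem (lemma \ref{lem:stable_mnfold_are_open_sets}) to upgrade the infinitesimal picture to an honest local picture on a sufficiently small neighbourhood. The orthogonality in \eqref{enu:lem_local_Neumann_near_saddle-1} and the interlacing in \eqref{enu:lem_local_Neumann_near_saddle-2} are then both consequences of the relative positions of the four distinct tangent lines at $r$, which is a finite, explicitly computable configuration.
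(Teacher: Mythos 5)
Your core approach is the same one the paper uses: the paper dispatches both claims in two sentences by citing the Taylor-expansion argument of McDonald and Fulling, and your Morse-chart analysis is essentially that argument carried out in detail. What you add is genuinely valuable in the present setting: McDonald--Fulling work with a flat metric, whereas here $g$ is an arbitrary Riemannian metric, and your observation that the right objects are not the eigendirections of the Hessian in an arbitrary chart but those of $g^{-1}\mathrm{Hess}_r f$ (equivalently, diagonalize the Hessian in a frame that is $g$-orthonormal at $r$; these eigendirections are then $g$-orthogonal because $g^{-1}\mathrm{Hess}_r f$ is self-adjoint with respect to $g$) is exactly the correct fix, combined with the tangency statement of the stable manifold theorem. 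One small inaccuracy: you cannot in general have a chart that is simultaneously $g$-orthonormal at $r$ and in which $f=f(r)+x^{2}-y^{2}$ \emph{exactly}; in the orthonormal chart you get $f=f(r)+ax^{2}-by^{2}+O(3)$ with possibly $a\neq b$, so the nodal tangent directions are $y=\pm\sqrt{a/b}\,x$ rather than the exact diagonals. This is harmless, since all the interlacing argument needs is that the null directions of the Hessian strictly separate its eigendirections, which they do; but you should state the argument that way rather than via an exact $45^{\circ}$ rotation.

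The one genuine gap is that you analyze only the curves attached to $r$, i.e.\ $\overline{W^{s}(r)\cup W^{u}(r)}$, whereas the statement concerns $\Nd\cap U$, and by definition $\Nd$ is the \emph{closure of the union over all saddle points} of their stable and unstable manifolds. To conclude that $\Nd\cap U$ consists of exactly your four curves, you must rule out that manifolds of other saddles $r'\neq r$ (or their limit points) intrude into every neighbourhood of $r$. This can be patched as follows: a Morse function on a compact surface has finitely many critical points, so $\Nd$ is a finite union of sets $\overline{W^{s}(r')}$, $\overline{W^{u}(r')}$; any such set not containing $r$ is closed and is excluded by shrinking $U$; and if $r\in\overline{W^{s}(r')}$ for some $r'\neq r$, then by lemma \ref{lem:boundary_of_stble_mnfld_of_saddle} we have $W^{s}(r')\cap W^{u}(r)\neq\emptyset$, and since $W^{s}(r')$ is just a pair of flow lines, the offending flow line is simultaneously a branch of $W^{s}(r')$ and a branch of $W^{u}(r)$ (a saddle--saddle connection), so it contributes no curve beyond your four. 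A similar remark disposes of a branch of $W^{u}(r)$ itself re-entering small neighbourhoods of $r$: it cannot, because $f$ strictly decreases along it (lemma \ref{lem:flow_lim_is_critic_pt} plus monotonicity excludes homoclinic gradient orbits), and a flow line cannot reach a critical point in finite time. Without an argument of this kind you have established the geometry of the four curves attached to $r$, but not that they exhaust $\Nd\cap U$, which is what the lemma asserts.
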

\begin{rem}
The case $f\left(r\right)=0$ is particularly interesting as it relates
the nodal lines and the Neumann lines in the vicinity of $r$.\end{rem}
\begin{proof}
The first claim of the lemma is proved in \cite{McDFul_ptrs13} by
examining the Taylor expansion of $f$ around $r$. The second claim
follows similarly.
\end{proof}

\noindent We start by providing three basic lemmata which are required
for proving that Neumann lines and Neumann domains form complementary
sets (cf.~proposition \ref{prop:complementary_property_no_boundary}).
\begin{lem}
{[}Proposition 3.19 in \cite{BanHur_MorseHomology04}{]}\label{lem:flow_lim_is_critic_pt}
$\forall x\in M$, both limits $\lim_{t\rightarrow\pm\infty}\varphi\left(t,x\right)$
exist and they are both critical points of $f$, i.e., $\lim_{t\rightarrow\pm\infty}\varphi\left(t,x\right)\in\Cr$.
\end{lem}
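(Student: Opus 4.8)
The plan is to treat $f$ as a Lyapunov function for the negative gradient flow and to combine monotonicity with the compactness of $M$. First I would differentiate $f$ along a trajectory: by the chain rule together with the flow equation \eqref{eq:flow},
\[
\frac{d}{dt}\,f\!\left(\varphi(t,x)\right)=\left\langle \nabla f,\,\partial_{t}\varphi\right\rangle=-\left|\nabla f\big|_{\varphi(t,x)}\right|^{2}\le 0,
\]
so $t\mapsto f(\varphi(t,x))$ is non-increasing, and strictly decreasing at every parameter value where the trajectory is not already at a critical point. Since $M$ is compact, $f$ is bounded, so this monotone bounded function converges to some value $c_{+}\in\R$ as $t\to+\infty$, and likewise to some $c_{-}\in\R$ as $t\to-\infty$.

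Next I would pass to the $\omega$-limit set $\omega(x)=\bigcap_{T\ge 0}\overline{\{\varphi(t,x):t\ge T\}}$. Because $M$ is compact, a standard fact about flows on compact spaces guarantees that $\omega(x)$ is nonempty, compact, connected and invariant under $\varphi$. By continuity of $f$ and the convergence of the $f$-values, $f\equiv c_{+}$ on $\omega(x)$. I would then exploit invariance: for any $y\in\omega(x)$ the entire orbit through $y$ stays in $\omega(x)$, so $f$ is constant along it, which forces $\frac{d}{dt}f(\varphi(t,y))=-|\nabla f|^{2}=0$ and hence $\nabla f\big|_{y}=0$. Thus $\omega(x)\subseteq\Cr$.

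To upgrade this to the existence of an actual limit, I would invoke the Morse hypothesis: nondegenerate critical points are isolated, and by compactness of $M$ there are only finitely many of them. A connected subset of a finite set of isolated points is a single point, so $\omega(x)=\{p\}$ for one $p\in\Cr$, which is precisely the statement $\lim_{t\to+\infty}\varphi(t,x)=p$. Applying the identical argument to the $\alpha$-limit set, or equivalently replacing $f$ by $-f$ and reversing time, disposes of the $t\to-\infty$ case.

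The main obstacle is the step that promotes ``the $f$-values converge and every accumulation point is critical'' to ``the trajectory has a unique limit.'' Monotone convergence of $f$ by itself does not forbid a trajectory from drifting along a whole arc of limit points; the resolution genuinely needs two ingredients working together, namely the connectedness of $\omega(x)$ (a soft, compactness-based property of flows) and the isolation of critical points (supplied by the Morse assumption). Establishing connectedness of $\omega(x)$ carefully, or alternatively invoking a local normal form near the isolated critical point to trap the trajectory, is where the real work lies.
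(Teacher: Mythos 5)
Your proof is correct, but note that the paper does not actually prove this lemma: it is quoted, with attribution, as Proposition 3.19 of the cited Morse homology text of Banyaga and Hurtubise, so there is no internal argument to compare against. Your argument is a clean, self-contained packaging of that textbook fact. The Lyapunov computation $\frac{d}{dt}f\left(\varphi(t,x)\right)=-\left\Vert \nabla f\right\Vert ^{2}\leq0$ gives convergence of the $f$-values; compactness of $M$ makes the $\omega$-limit set $\omega(x)$ nonempty, compact, connected and flow-invariant; constancy of $f$ on $\omega(x)$ together with invariance forces $\omega(x)\subset\Cr$; and the Morse hypothesis (nondegenerate, hence isolated, hence finitely many critical points on a compact manifold) collapses the connected set $\omega(x)$ to a single point, which is precisely the existence of the limit. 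Reversing time handles $t\rightarrow-\infty$. Each step is sound, and you correctly isolate where the Morse assumption is genuinely needed: without isolation of critical points a trajectory can accumulate on a continuum of critical points and the conclusion fails, so no purely soft argument can avoid this ingredient. The route taken in the cited reference rests on the same three pillars (monotonicity of $f$ along trajectories, compactness, and isolation of nondegenerate critical points), typically organized more quantitatively -- e.g., via finiteness of $\int_{0}^{\infty}\left\Vert \nabla f\right\Vert ^{2}dt$ and an estimate preventing repeated excursions away from an accumulating critical point -- rather than through the general theory of $\omega$-limit sets; what your version buys is brevity and transparency, at the cost of quoting (or proving, as you acknowledge) the standard facts about limit sets of flows on compact spaces.
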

~
\begin{lem}
\label{lem:boundary_of_stble_mnfld_of_saddle} Let $r\in\Sd.$ Then
$q\in\overline{W^{s}\left(r\right)}\backslash W^{s}\left(r\right)$
if and only if $q\in\Cr$ and $W^{s}\left(r\right)\cap W^{u}\left(q\right)\neq\emptyset$.
Similarly, $p\in\overline{W^{u}\left(r\right)}\backslash W^{u}\left(r\right)$
if and only if $p\in\Cr$ and $W^{u}\left(r\right)\cap W^{s}\left(p\right)\neq\emptyset$.\end{lem}
\begin{proof}
We start by proving the direction $\left(\Rightarrow\right)$. By
lemma \ref{lem:stable_mnfold_are_open_sets} we know that $W^{s}\left(r\right)$
is homeomorphic to an embedded open one dimensional interval. Let
$x_{1},x_{2}\in W^{s}\left(r\right)$ two points in different connected
components of $W^{s}(r)\setminus\{r\}$. Each of the sets $X_{1}:=\left\{ \varphi\left(t,x_{1}\right)\right\} _{t\in\R},\,X_{2}:=\left\{ \varphi\left(t,x_{2}\right)\right\} _{t\in\R}$
is also homeomorphic to an embedded open one dimensional interval,
and we have the disjoint decomposition $W^{s}\left(r\right)=X_{1}\cup\left\{ r\right\} \cup X_{2}$.
As $\lim_{t\rightarrow\infty}\varphi\left(t,x_{1,2}\right)=r$ we
get that $\overline{W^{s}\left(r\right)}\backslash W^{s}\left(r\right)=\left\{ \lim_{t\rightarrow-\infty}\varphi\left(t,x_{1}\right),\,\lim_{t\rightarrow-\infty}\varphi\left(t,x_{2}\right)\right\} $.
In particular we conclude that
\begin{equation}
q\in\overline{W^{s}\left(r\right)}\backslash W^{s}\left(r\right)\,\Leftrightarrow\,q\in\left\{ \lim_{t\rightarrow-\infty}\varphi\left(t,x_{1}\right)\,,\,\lim_{t\rightarrow-\infty}\varphi\left(t,x_{2}\right)\right\} .
\end{equation}
Combining this with the implication
\begin{equation}
q\in\left\{ \lim_{t\rightarrow-\infty}\varphi\left(t,x_{1}\right)\,,\,\lim_{t\rightarrow-\infty}\varphi\left(t,x_{2}\right)\right\} \Rightarrow\,q\in\Cr\,\land\,\left\{ x_{1}\in W^{u}\left(q\right)\,\lor\,x_{2}\in W^{u}\left(q\right)\right\} ,
\end{equation}
which follows from lemma \ref{lem:flow_lim_is_critic_pt} we get
\begin{equation}
q\in\overline{W^{s}\left(r\right)}\backslash W^{s}\left(r\right)\,\Rightarrow\,q\in\Cr\,\land\,W^{s}\left(r\right)\cap W^{u}\left(q\right)\neq\emptyset.
\end{equation}
For the other direction, we choose some $x\in W^{s}\left(r\right)\cap W^{u}\left(q\right)$.
For any sequence, $\left\{ t_{n}\right\} $ such that $t_{n}\rightarrow-\infty$
we get that $\varphi\left(t_{n},x\right)\rightarrow q$ and thus $q\in\overline{W^{s}\left(r\right)}$
as an accumulation point of a sequence in $W^{s}\left(r\right)$.
The assumption $q\in\Cr$ gives $\left.\na f\right|_{q}=0$, which
implies $q\notin W^{s}\left(r\right)$, so that $q\in\overline{W^{s}\left(r\right)}\backslash W^{s}\left(r\right)$.
The second part of the lemma is proven similarly. \end{proof}
\begin{lem}
\label{lem:Neumann_nodes_contain_all_extrema}If $\Nd\neq\emptyset$
then $\Nd=\left\{ \bigcup_{{r\in\Sd}}W^{s}(r)\cup W^{u}(r)\right\} \bigsqcup\Xt$.\end{lem}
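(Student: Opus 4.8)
The plan is to set $N_0:=\bigcup_{r\in\Sd}\left(W^s(r)\cup W^u(r)\right)$, so that $\Nd=\overline{N_0}$, and to prove the asserted identity by establishing (i) the disjointness $N_0\cap\Xt=\emptyset$, (ii) the inclusion $\overline{N_0}\setminus N_0\subseteq\Xt$, and (iii) the reverse inclusion $\Xt\subseteq\overline{N_0}$. Since $M$ is compact and $f$ is Morse, $\Cr$ is finite, so the closure of the finite union commutes with the union: $\overline{N_0}=\bigcup_{r\in\Sd}\left(\overline{W^s(r)}\cup\overline{W^u(r)}\right)$. For (i), I would observe that every $x\in\Xt$ is a critical point and hence a fixed point of the flow $\varphi$; if $x\in W^s(r)$ then $x=\lim_{t\to\infty}\varphi(t,x)=r$, contradicting $x\in\Xt$ and $r\in\Sd$, and symmetrically for $W^u(r)$. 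Thus no extremum lies in $N_0$ and the union in the statement is genuinely disjoint.

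For (ii), I would invoke Lemma \ref{lem:boundary_of_stble_mnfld_of_saddle}, by which every point of $\overline{W^s(r)}\setminus W^s(r)$ and of $\overline{W^u(r)}\setminus W^u(r)$ is a critical point. Hence $\overline{N_0}\setminus N_0\subseteq\Cr=\Sd\cup\Xt$. Each saddle $r$ already lies in $N_0$, because $r\in W^s(r)$ (as $\lim_{t\to\infty}\varphi(t,r)=r$); so $\overline{N_0}\setminus N_0$ contains no saddle and is therefore contained in $\Xt$. Together with (i) this gives $\overline{N_0}\subseteq N_0\sqcup\Xt$.

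The hard part is (iii): showing that \emph{every} extremum actually appears in the closure. By Lemma \ref{lem:boundary_of_stble_mnfld_of_saddle}, for a minimum $p$ it suffices to produce a saddle $r$ with $W^u(r)\cap W^s(p)\neq\emptyset$, since then $p\in\overline{W^u(r)}\subseteq\overline{N_0}$; note that $W^s(r)$ cannot serve here, because $W^u(p)=\{p\}$ and $p\notin W^s(r)$. To find such an $r$ I would sweep the sublevel sets: let $D_c$ be the connected component of $\{f\le c\}$ containing $p$. For $c$ slightly above $f(p)$, $D_c$ is a small disk whose only critical point is $p$, and as $c$ grows $D_c$ eventually reaches a saddle (one exists precisely because $\Nd\neq\emptyset$). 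Let $c_1$ be the first critical value above $f(p)$ at which $\partial D_c$ meets a critical point; a local analysis shows this point cannot be a maximum (a maximum enters a sublevel set through its interior, not its boundary), so it is a saddle $r$. At the value $c_1$, a descending separatrix of $r$, i.e.\ a ray of $W^u(r)$, enters $D_{c_1}$, whose only minimum is $p$, and therefore flows forward to $p$; this gives $W^u(r)\cap W^s(p)\neq\emptyset$. Applying the same argument to $-f$, which fixes $\Sd$ and $\Nd$ while interchanging $\Min$ with $\Max$ and $W^s$ with $W^u$, handles the maxima. The main obstacle is to make this sweep rigorous: one must verify that $D_c$ remains a disk containing only $p$ up to $c_1$, that the first boundary critical point is indeed a saddle, and that at least one descending separatrix of that saddle genuinely enters $D_{c_1}$ and hence terminates at $p$, including the case where $c_1$ is a merging value of two sublevel components.
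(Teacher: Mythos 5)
Your steps (i) and (ii) are correct and coincide with the paper's easy inclusion $\Nd\subseteq\left\{ \bigcup_{r\in\Sd}W^{s}(r)\cup W^{u}(r)\right\} \coprod\Xt$, obtained exactly as you obtain it, from lemma \ref{lem:boundary_of_stble_mnfld_of_saddle} together with $\Sd\subset\bigcup_{r\in\Sd}W^{s}(r)$. The substance of the lemma is your step (iii), and there your route differs genuinely from the paper's: you sweep sublevel sets $D_{c}$ (a mountain-pass type argument), whereas the paper fixes a maximum $q$, takes a punctured neighbourhood $U\setminus\{q\}$, and applies the decomposition $M=\coprod_{x\in\Cr}W^{s}(x)$ of lemma \ref{lem:decomp_to_stble_mnfolds}: if no saddle's stable manifold accumulates at $q$, then the connected set $U\setminus\{q\}$ is covered by disjoint open stable manifolds of minima, hence by a single one, $W^{s}(p)$; this forces $\overline{W^{s}(p)}=M$, so $M=S^{2}$, $\Sd=\emptyset$ and $\Nd=\emptyset$, contradicting the hypothesis. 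That argument takes a few lines and uses nothing beyond lemmata \ref{lem:stable_mnfold_are_open_sets}--\ref{lem:boundary_of_stble_mnfld_of_saddle}.

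As written, however, your step (iii) has genuine gaps, and they sit exactly where the content of the lemma lies; flagging them as ``the main obstacle'' does not discharge them. Concretely: (a) you must prove that a critical point ever appears on $\partial D_{c}$. A saddle can join $D_{c}$ either at its own level (and then it is a boundary point) or through a merger of two components of the sublevel set, and you need the Morse-theoretic fact that mergers occur only across index-one critical points lying on the boundary of the merged component; this handle-attachment argument is not supplied. (b) The claim that $D_{c}$ ``remains a disk whose only critical point is $p$'' up to $c_{1}$ does not follow from your definition of $c_{1}$: a maximum is never a boundary point of a sublevel set, so nothing in that definition prevents a maximum from popping into the \emph{interior} of $D_{c}$ before $c_{1}$; ruling this out essentially requires the paper's punctured-neighbourhood argument, i.e., the very thing being proved. (Fortunately the full claim is not needed: it suffices that no saddle and no minimum other than $p$ lies in $D_{c}$ for $c<c_{1}$, which does follow from (a), combined with the observation that a forward limit of the descending flow is never a maximum.) (c) When several saddles share the level $c_{1}$, the saddle you find on $\partial D_{c_{1}}$ need not itself be one whose descending separatrix enters $D$; you must select, among the level-$c_{1}$ saddles attached to $D_{c_{1}}$, one whose descending separatrix has points in $D_{c_{1}-\delta}$ for small $\delta>0$, and only then does the forward-limit argument force the limit to be $p$. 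Knowing merely that a separatrix lies in $D_{c_{1}}$ leaves open that its limit is another saddle or another minimum that joined $D$ exactly at level $c_{1}$. All three points can be repaired, so your strategy is viable, but it requires substantially more Morse theory of sublevel sets than the paper's argument, which bypasses the sweep entirely via lemma \ref{lem:decomp_to_stble_mnfolds}.
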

\begin{proof}
We observe that
\begin{equation}
\begin{aligned}\Nd & \subseteq\left\{ \bigcup_{{r\in\Sd}}W^{s}(r)\cup W^{u}(r)\right\} \bigcup\Cr\\
 & =\left\{ \bigcup_{{r\in\Sd}}W^{s}(r)\cup W^{u}(r)\right\} \bigsqcup\Xt,
\end{aligned}
\end{equation}
where the first line is a deduction from lemma \ref{lem:boundary_of_stble_mnfld_of_saddle}
and the second holds as $\Cr\backslash\Xt=\Sd\subset\bigcup_{{r\in\Sd}}W^{s}(r)$.

We proceed to show that the inclusion above is an exact equality.
Let $q\in M$ be a maximum of $f$. We show that if $\Nd\neq\emptyset$
then $\exists r\in\Sd$ such that $q\in\overline{W^{s}(r)}$. Similar
arguments can be used to show that if $p\in M$ is a minimum of $f$
then $\exists r\in\Sd$ such that $p\in\overline{W^{u}(r)}$ and in
combination this proves the lemma. We consider now the maximum $q\in M$
in view of the second decomposition stated in lemma \ref{lem:decomp_to_stble_mnfolds}.
According to it, $M$ can be decomposed into \emph{(a)} stable manifolds
of minima, which are two dimensional simply connected subsets of $M$,
\emph{(b)} stable manifolds of the saddle points, which are open one-dimensional
subsets of $M$ and (\emph{c}) the set of all maxima. We assume that
there is no saddle point, such that $q$ belongs to the closure of
its stable manifold and show that this implies $\Nd=\emptyset$. By
the assumption and lemma \ref{lem:boundary_of_stble_mnfld_of_saddle},
there is an open neighborhood $U$ of $q$, which does not intersect
with stable manifolds of saddle points and does not contain any other
maxima. By the decomposition of $M$ from lemma \ref{lem:decomp_to_stble_mnfolds},
the punctured neighborhood, $U\setminus\left\{ q\right\} $, can be
covered by a finite number of stable manifolds of minima. However,
as these stable manifolds are open and disjoint this is only possible
if $U\setminus\left\{ q\right\} $ is covered by exactly one stable
manifold of some minimum $p$. As $W^{s}\left(p\right)$ is homeomorphic
to an open two-dimensional disk we conclude that $q$ is a single
connected component of this stable manifold's boundary, $\partial W^{s}\left(p\right)$
and this implies that $\overline{W^{s}\left(p\right)}=M$ and $M=S^{2}$.
In particular, this leaves no saddle points of $f$ on $M$ and therefore
$\Nd=\emptyset$.
\end{proof}

\subsection{Proofs of proposition \ref{prop:complementary_property_no_boundary}
and theorem \ref{thm:topological_properties_no_boundary}}

\noindent Following lemma \ref{lem:Neumann_nodes_contain_all_extrema},
as long as the set of Neumann lines is non-empty, we get that it is
complementary to the union of the Neumann domains, which is the statement
of proposition \ref{prop:complementary_property_no_boundary}, proven
below.
\begin{proof}
\noindent [of proposition \ref{prop:complementary_property_no_boundary}]
Note the following disjoint decomposition of the manifold
\begin{equation}
M=\Big\{\bigsqcup_{{{p\in\Min}\atop {q\in\Max}}}\Big[W^{s}\left(p\right)\cap W^{u}\left(q\right)\Big]\Big\}\bigsqcup\Big\{\bigsqcup_{{{r\in\Sd}\atop {}}}\Big\{ W^{s}(r)\cup W^{u}(r)\Big\}\Big\}\bigsqcup\Xt.
\end{equation}
One can check the validity of this decomposition by separation to
cases. For every $x\in M$, we get from lemma \ref{lem:flow_lim_is_critic_pt}
that $\lim_{t\rightarrow\pm\infty}\varphi\left(t,x\right)\in\Cr$.
If $x$ is a critical point itself then both limits are equal to $x$
and $x\in\Cr=\Xt\backslash\Sd$. Otherwise, if both limits ($\lim_{t\rightarrow\pm\infty}\varphi\left(t,x\right)$)
are different and they are obtained at extremal points then
\begin{equation}
x\in\bigsqcup_{\begin{array}{c}
{{p\in\Min}\atop {q\in\Max}}\end{array}}\left[W^{s}\left(p\right)\cap W^{u}\left(q\right)\right].
\end{equation}
Finally, there is also the case where at least one of the limits is
obtained at a saddle point and then $x\in\bigsqcup_{{r\in\Sd}}\left\{ W^{s}(r)\cup W^{u}(r)\right\} $.
The proposition is now proven as the last two terms of the union equal
$\Nd$ by lemma \ref{lem:Neumann_nodes_contain_all_extrema}.\end{proof}
\begin{rem}
Let $M$ be a two-dimensional manifold as above with genus $g$. Let
$f$ be a Morse function on $M$ with no Neumann lines, $\Nd=\emptyset$.
From the equivalence $\Nd=\emptyset\Leftrightarrow\Sd=\emptyset$
and from Morse inequalities, $2-2g=\left|\Max\right|-\left|\Sd\right|+\left|\Min\right|$
we deduce that $g=0$ and $f$ has a single maximum and a single minimum.
In this case $f$ has a single Neumann domain and the only points
in $M$ not belonging to it are the two extrema. All other cases ($\Nd\neq\emptyset$)
are treated by proposition \ref{prop:complementary_property_no_boundary}.\end{rem}
\begin{proof}
\uline{[of theorem \mbox{\ref{thm:topological_properties_no_boundary}}]}~\\
{[}\eqref{enu:thm-no-bdry-critical-points}{]}. From lemma \ref{lem:Neumann_nodes_contain_all_extrema},
we deduce $\Xt\subset\Nd$. In addition, $\Sd\subset\Nd$ by definition.\\
 {[}\eqref{enu:thm-no-bdry-extremal-points}{]} First we show that
$p,q\in\partial\Omega$. Since $p,q\notin\Omega$ it suffices to show
that $p,q\in\overline{\Omega}$. Start from any $x\in\Omega$. Consider
the flow line which passes through $x$, $X=\left\{ \varphi(t,x)\right\} _{t\in\mathbb{R}}$.
As $x\in W^{s}\left(p\right)$, we get by definition that $X\subset W^{s}\left(p\right)$.
Similarly, $X\subset W^{u}\left(q\right)$ and therefore $X\subset\Omega$.
As $\lim_{t\rightarrow\infty}\varphi\left(t,x\right)=p,$ each neighborhood
of $p$ has a non-empty intersection with $X$ (and hence with $\Omega$)
and therefore $p\in\partial\Omega$. A similar argument shows that
$q\in\partial\Omega$. Now assume by contradiction that there is some
other minimum, $\tilde{p}\neq p$ such that $\tilde{p}\in\partial\Omega$.
Being on the boundary, we have that $W^{s}\left(\tilde{p}\right)\cap\Omega\neq\emptyset$.
From the definition of $\Omega$, we get $W^{s}\left(\tilde{p}\right)\cap W^{s}\left(p\right)\neq\emptyset$,
which gives a contradiction. A similar argument shows that $q$ is
the only maximum of $f$ which belongs to $\partial\Omega$.\\
 {[}\eqref{enu:thm-no-bdry-Morse-Smale}{]} This is an immediate deduction
from the definition of a Morse-Smale function.\\
 {[}\eqref{enu:thm-no-bdry-simply-connected}{]} $\Omega$ is open
being the intersection of two open sets (lemma \ref{lem:stable_mnfold_are_open_sets})
or a connected component of such intersection. Examine the following
sequence of homomorphisms between homology groups $H_{n}$, $H_{n-1}$.
\begin{equation}
H_{n}\left(W^{s}\left(p\right)\cup W^{u}\left(q\right)\right)\longrightarrow H_{n-1}\left(W^{s}\left(p\right)\cap W^{u}\left(q\right)\right)\longrightarrow H_{n-1}\left(W^{s}\left(p\right)\right)\oplus H_{n-1}\left(W^{u}\left(q\right)\right).
\end{equation}
This sequence is exact, being part of the Mayer-Vietoris sequence
(cf.~\cite{BoTu82}) and using that the sets $W^{s}\left(p\right),\,W^{u}\left(q\right)$
are open. For $n\geq2$ we have that $H_{n}\left(W^{s}\left(p\right)\cup W^{u}\left(q\right)\right)=0$
as $M$ is two-dimensional and also as $W^{s}\left(p\right)\cup W^{u}\left(q\right)\subsetneq M$
(which holds as $\Sd\neq\emptyset$). For $n\geq2$ we also have that
$H_{n-1}\left(W^{s}\left(p\right)\right)=H_{n-1}\left(W^{u}\left(q\right)\right)=0$,
as $W^{s}\left(p\right),\,W^{u}\left(q\right)$ are both embeddings
of a two dimensional open disk, by lemma \ref{lem:stable_mnfold_are_open_sets}.
We thus conclude from the exact sequence above that $H_{n-1}\left(W^{s}\left(p\right)\cap W^{u}\left(q\right)\right)=0$
for $n\geq2$. In particular, for $n=2$ we conclude that, $\Omega$
is simply connected if it is path connected. A Neumann domain is indeed
path connected, as $W^{s}\left(p\right)$ and $W^{u}\left(q\right)$
are smooth embeddings of two-dimensional disks, by lemma \ref{lem:stable_mnfold_are_open_sets}.\\
 {[}\eqref{enu:thm-no-bdry-level-sets-1}{]} As $c\in\left(f\left(p\right),f\left(q\right)\right)$,
by continuity, $f$ must obtain the value $c$ somewhere in $\Omega$.\\
 {[}\eqref{enu:thm-no-bdry-level-sets-2}{]} Assume by contradiction
that there is a connected component of $\overline{\Omega}\cap f^{-1}\left(c\right)$
which does not intersect $\partial\Omega$. As $\Omega$ is simply
connected, this means that there is a subdomain $\omega\subset\Omega$
such that either $\left.f\right|_{\omega}\geq c$ or $\left.f\right|_{\omega}\leq c$
and $\left.f\right|_{\partial\omega}=c$. $f$ cannot be identically
equal to $c$ in $\omega$, being a Morse function, and therefore
there is either a maximum or minimum of $f$ inside $\omega$, which
contradicts \eqref{enu:thm-no-bdry-critical-points}.\\
 {[}\eqref{enu:thm-no-bdry-level-sets-3}{]} We deduce from \eqref{enu:thm-no-bdry-simply-connected}
that $\partial\Omega$ has a single connected component. This boundary,
$\partial\Omega$, decomposes to two curves, $\gamma_{1},\gamma_{2}$,
whose endpoints are $p,q$. Namely, $\partial\Omega=\gamma_{1}\cup\gamma_{2}$
and $\gamma_{1}\cap\gamma_{2}=\left\{ p,q\right\} $. The restriction,
$\left.f\right|_{\partial\Omega}$ is monotonic on $\gamma_{1}$ and
$\gamma_{2}$. As $c\in\left(f\left(p\right),f\left(q\right)\right)$
we conclude that $\left.f\right|_{\partial\Omega}$ equals $c$ at
exactly two points, $x\in\gamma_{1},\,y\in\gamma_{2}$. By \eqref{enu:thm-no-bdry-critical-points}
$f^{-1}\left(c\right)$ has no critical points in $\Omega$, so we
deduce from the inverse function theorem that $f^{-1}\left(c\right)$
is union of one-dimensional non intersecting curves. The endpoints
of these curves are $x,y$. Yet, if there is more than one curve in
this union, this implies the existence of a subdomain $\omega\subset\Omega$
such that either $\left.f\right|_{\omega}\geq c$ or $\left.f\right|_{\omega}\leq c$
and $\left.f\right|_{\partial\omega}=c$. This was already ruled out
in \eqref{enu:thm-no-bdry-level-sets-2}.
\end{proof}
\noindent Theorem \ref{thm:topological_properties_no_boundary} implies
that the eigenfunction restriction to a Neumann domain, $\left.f\right|_{\Omega}$,
has a relatively simple structure. According to claim \eqref{enu:thm-no-bdry-critical-points},
$\left.f\right|_{\Omega}$ does not have any critical points. Claim
\eqref{enu:thm-no-bdry-extremal-points} shows that there are only
two extremal points of $\left.f\right|_{\Omega}$, which lie on the
boundary, $\partial\Omega$, and they are exactly the defining minimum
and maximum, $p,q$, of the Neumann domain, $\Omega_{p,q}=W^{s}\left(p\right)\cap W^{u}\left(q\right)$.
According to claim \eqref{enu:thm-no-bdry-simply-connected} a Neumann
domain, $\Omega$, is simply connected and this is used in proving
claims \eqref{enu:thm-no-bdry-level-sets-1}-\eqref{enu:thm-no-bdry-level-sets-3},
which deal with the level set contained within $\Omega$. By claim
\eqref{enu:thm-no-bdry-level-sets-3}, the level pattern of $\left.f\right|_{\overline{\Omega}}$
is simple; it is a single line without self intersections and with
two endpoints on the boundary, $\partial\overline{\Omega}$. For the
additional assumption of Morse-Smale, a typical structure of $\left.f\right|_{\overline{\Omega}}$
is demonstrated in figure \ref{fig-gen str ND}. For a Morse function
which is not Morse-Smale it is possible that there are more than two
saddle points on the boundary of the Neumann domain.\\

\section{Manifolds with Dirichlet boundary\label{sec:with_boundary}}

\noindent In this section we discuss the structure of Neumann domains
on surfaces with boundary. The manifolds, $M$, which we consider
are simply-connected, compact subsets of a compact, closed 2-dimensional
smooth manifold $\mathbf{M}$. We assume that $M$ has a piecewise
smooth boundary with Dirichlet boundary conditions. If $M$ has angles
we assume that they are all non-zero. Many of the explicit examples
which are used to study the characteristic structures of eigenfunctions
are of this type (for example billiards \cite{BluGnuSmi_prl02}).
A particular complication that arises for the case with boundary is
due to the fact that the structure of stable and unstable manifolds
at the boundary is not easily accessible in general. To circumvent
this issue we restrict our study to a class of eigenfunctions which
we introduce in the following.
\begin{defn}
Let $f:\overline{M}\rightarrow\R$ be a Morse-eigenfunction on $\overline{M}$.
We say $f$ has the \emph{extension} \emph{property} iff there exists
an open neighborhood $\widehat{M}$ with $\overline{M}\subset\widehat{M}\subset\mathbf{M}$
and a Morse function $\widehat{f}:\widehat{M}\rightarrow\R$ such
that
\begin{equation}
\widehat{f}|{}_{\overline{M}}\equiv f.
\end{equation}

\end{defn}
Morse eigenfunctions with the extension property have the following
stronger extension property to Morse functions on $\mathbf{M}$.
\begin{lem}
\label{lem:extended_function}Let $f:\overline{M}\rightarrow\R$ be
a Morse-eigenfunction with the extension property, then there exists
a Morse function $\tilde{f}:\mathbf{M}\rightarrow\R$ that extends
$f$, i.e.

\begin{equation}
\tilde{f}|{}_{\overline{M}}\equiv f.
\end{equation}
\end{lem}
\begin{proof}
It follows from the extension property that there exist an open neighborhood
$W$ of $\overline{M}$ and a closed neighborhood $A$ of $\overline{M}$
as well as an extension $\widehat{f}$ such that
\begin{equation}
\mathscr{C}(\widehat{f}|{}_{W})\subset\mathring{A}\subset A\subset W\subset\mathbf{M},
\end{equation}
where $\mathscr{C}(\widehat{f}|{}_{W})\subset\mathring{A}$ follows
as $\widehat{f}$ has isolated critical points, so that $A$ can be
chosen to satisfy this inclusion. The existence of the extension $\tilde{f}$
to $\mathbf{M}$ follows immediately from Lemma 4.15 of \cite{Schwarz_MorseHomology}. \end{proof}
\begin{rem}
Determining general criteria for $M$ such that all Morse-eigenfunctions
have the extension property appears to be a non-trivial problem. In
\cite{MoNi57} local extendibility of eigenfunctions as solutions
under certain restrictive conditions are discussed. However, to apply
the extendibility of \cite{Schwarz_MorseHomology} one requires an
extension to a neighborhood of the whole domain. It seems therefore
reasonable to restrict to functions that allow for such an extension.
In particular, the extension property holds for Morse-eigenfunctions
on rectangular domains and on the disk, which follows from their explicitly
given eigenfunctions. Furthermore, it seems possible to extend the
treatment presented here to eigenfunctions which are weakly Morse
in the sense that they allow for degenerated saddle points. This would
require a more careful study of these cases beyond the standard theory
for Morse functions which we employ here.
\end{rem}
We use the notation $\Sd$ for the set of saddle points, which now
also includes the saddle points of $f$ on $\partial M$, and similarly
for $\Cr$. For extrema the sets $\Max$ and $\Min$ remain the same;
these cannot lie on the boundary since $f$ is an eigenfunction with
Dirichlet boundary conditions. In the following we define Neumann
domains and Neumann lines for manifolds with Dirichlet boundary (in
definition \ref{def:ND_using_CW_complex_Dirichlet_boundary} and its
preceding discussion) and prove that the results stated in proposition
\ref{prop:complementary_property_no_boundary} and theorem \ref{thm:topological_properties_no_boundary}
hold in a slightly different form in the boundary case (the analogues
are proposition \ref{prop:complementary_property_with_boundary} and
theorem \ref{thm-2}).

\subsection{Gradient flow in the boundary case}

Before discussing the structure of stable and unstable manifolds,
we introduce an adapted version of the gradient flow given in \eqref{eq:flow},
for manifolds with boundary. Let $f$ be a Morse eigenfunction on
$M$ with the extension property and $\tilde{f}$ the extended Morse
function on $\mathbf{M}$, as given in lemma \ref{lem:extended_function}.
Let $\tilde{\varphi}$ be the gradient flow of $\tilde{f}$ on $\mathbf{M}$,
as defined in \eqref{eq:flow}. We define the gradient flow of $f$
on $\overline{M}$ as

\begin{equation}
\begin{aligned} & \varphi:\mathbb{R}\times\,\overline{M}\rightarrow\overline{M},\\
 & \varphi(t,\,x)=\begin{cases}
\tilde{\varphi}\left(t,x\right) & \left(t\geq0\textrm{~and }\left\{ \tilde{\varphi}\left(t_{0},x\right)\right\} _{t_{0}\in\left[0,t\right]}\subset\overline{M}\right)~\textrm{or~}\\
 & \left(t<0\mbox{\textrm{~and }}\left\{ \tilde{\varphi}\left(t_{0},x\right)\right\} _{t_{0}\in\left[t,0\right]}\subset\overline{M}\right)\\
\tilde{\varphi}\left(t_{0},x\right) & \left(t>0\textrm{~with }t_{0}\in\left[0,t\right]~\textrm{being the minimal such that}~\tilde{\varphi}\left(t_{0},x\right)\in\partial M\right)~\textrm{or}~\\
 & \left(t<0\textrm{~with }t_{0}\in\left[t,0\right]~\textrm{being the maximal such that}~\tilde{\varphi}\left(t_{0},x\right)\in\partial M\right)
\end{cases}
\end{aligned}
\label{eq:flow-with-boundary}
\end{equation}

The flow above is along gradient lines and when a gradient line intersects
with the boundary, the flow is defined to stop at the intersection
point, or emanate from it, depending on the gradient direction. We
note that there is no essential need to use the extended function
when defining the flow above. However, using it somewhat simplifies
both the flow definition and the proofs to follow.

Note that the flow above is well-defined. For $t>0$, for example,
if the condition\\
$\left\{ \tilde{\varphi}\left(t_{0},x\right)\right\} _{t_{0}\in\left[0,t\right]}\subset\overline{M}$
fails to hold then there exists some $t_{1}\in\left(0,t\right]$ such
that $\tilde{\varphi}\left(t_{1},x\right)\notin\overline{M}$. By
definition of the flow $\tilde{\varphi}$ we have $\tilde{\varphi}\left(0,x\right)\in\overline{M}$.
This together with the continuity of the flow implies the existence
of $t_{0}\in\left[0,t\right]$ such that $\tilde{\varphi}\left(t_{0},x\right)\in\partial M$.
In particular there is a minimal $t_{0}$ which satisfies this, as
is required in \eqref{eq:flow-with-boundary}.

We claim that the gradient flow defined above does not depend on the
specific extended function $\tilde{f}$.
\begin{lem}
Let $f$ be a Morse function on $M$. Let $\tilde{f_{1}}$ and $\tilde{f_{2}}$
be extensions of $f$ from $M$ to $\mathbf{M}$, $\tilde{\varphi}_{1},\tilde{\varphi}_{2}$
the corresponding gradient flows and $\varphi_{1},\varphi_{2}$ the
corresponding flows induced by \eqref{eq:flow-with-boundary}. Then
$\varphi_{1}=\varphi_{2}$.\end{lem}
\begin{proof}
Assume by contradiction that there exists $t\in\R,~x\in\overline{M}$
such that $\varphi_{1}\left(t,x\right)\neq\varphi_{2}\left(t,x\right)$.
This implies that $t\neq0$, as $\varphi_{1}\left(0,x\right)=\varphi_{2}\left(0,x\right)=x$,
by definition. We may assume without loss of generality that $t>0$.
If $\left\{ \tilde{\varphi}_{1}\left(t_{0},x\right)\right\} _{t_{0}\in\left[0,t\right]}\subset\overline{M}$
then we get by the flow definition \eqref{eq:flow} that $\forall t_{0}\in\left[0,t\right],~~\tilde{\varphi}_{1}\left(t_{0},x\right)=\tilde{\varphi}_{2}\left(t_{0},x\right)$,
since $\nabla\tilde{f}_{1}|_{\overline{M}}=\nabla\tilde{f}_{2}|_{\overline{M}}$.
Hence $\tilde{\varphi}_{1}\left(t,x\right)=\tilde{\varphi}_{2}\left(t,x\right)$,
contradicting the assumption. Therefore, there exists a minimal $t_{1}\in\left[0,t\right]$
such that $\tilde{\varphi}_{1}\left(t_{1},x\right)\in\partial M$
and a minimal $t_{2}\in\left[0,t\right]$ such that $\tilde{\varphi}_{2}\left(t_{2},x\right)\in\partial M$.
This means that $\forall t_{0}\in\left[0,t_{1}\right)$ $\tilde{\varphi}_{1}\left(t_{0},x\right)\in M$
and implies $\forall t_{0}\in\left[0,t_{1}\right)~\tilde{\varphi}_{1}\left(t_{0},x\right)=\tilde{\varphi}_{2}\left(t_{0},x\right)$.
By the continuity of the flow $\tilde{\varphi}_{2}\left(t_{1},x\right)=\tilde{\varphi}_{1}\left(t_{1},x\right)\in\partial M$,
so that $t_{1}\in\left[0,t\right]$ is the minimal such that $\tilde{\varphi}_{2}\left(t_{1},x\right)\in\partial M$.
Therefore, $t_{1}=t_{2}$ and $\tilde{\varphi}_{1}\left(t_{1},x\right)=\tilde{\varphi}_{2}\left(t_{2},x\right)$,
contradicting the assumption.
\end{proof}
\noindent This definition of the flow allows to define the stable
and unstable manifolds similarly to the non-boundary case by
\begin{equation}
\begin{aligned}W^{s}(x) & =\{y\in\overline{M}\big|\lim_{t\rightarrow\infty}\varphi(t,y)=x\}\\
W^{u}(x) & =\{y\in\overline{M}\big|\lim_{t\rightarrow-\infty}\varphi(t,y)=x\}.
\end{aligned}
\end{equation}
Note that the above is defined not only for critical points, but also
for $x\in\partial M$. In order to prove properties of the stable
and unstable manifolds, we need the following lemma.
\begin{lem}
{[}follows from Proposition 3.18 in \cite{BanHur_MorseHomology04}{]}\label{lem:function_decreases_along_flow_line}
Let $M$ be a smooth Riemannian manifold with or without boundary.
Let $x\in M$ and $t\in\R$ be such that $\varphi(t,x)\notin\partial M$.
Then
\begin{equation}
\frac{\textrm{d}}{\textrm{d}t}f\left(\varphi(t,x)\right)=-\left\Vert \left(\nabla f\right)\left(\varphi(t,x)\right)\right\Vert ^{2}\leq0.
\end{equation}

\end{lem}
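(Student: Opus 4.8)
The plan is to reduce the claim to the ordinary chain-rule computation for the gradient flow, exploiting the hypothesis that the flow point lies off the boundary. The key observation is that the case distinction in the definition of $\varphi_x$ was introduced solely to handle boundary points; at an interior point the flow obeys the standard gradient equation $\partial_t\varphi_x=-\nabla f\big|_{\varphi_x}$, exactly as in \eqref{eq:flow}.

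First I would observe that, since $\varphi_x$ takes values in $\overline{M}$ and we assume $\varphi_x(t)\notin\partial M$, the point $\varphi_x(t)$ lies in the interior of $M$. Consequently the first branch of the piecewise definition of $\varphi_x$ is selected, and at this value of $t$ we have
\begin{equation}
\frac{\partial}{\partial t}\varphi_x(t)=-\nabla f\big|_{\varphi_x(t)}.
\end{equation}

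Next I would apply the chain rule to the composition $f\circ\varphi_x$, obtaining $\frac{\mathrm{d}}{\mathrm{d}t}f(\varphi_x(t))=\mathrm{d}f\big|_{\varphi_x(t)}\big(\partial_t\varphi_x(t)\big)$. Using the defining relation of the gradient with respect to the metric $g$, namely $\mathrm{d}f(V)=\langle\nabla f,V\rangle$ for every tangent vector $V$, together with the flow equation above, this becomes
\begin{equation}
\frac{\mathrm{d}}{\mathrm{d}t}f(\varphi_x(t))=\big\langle\nabla f\big|_{\varphi_x(t)},\,-\nabla f\big|_{\varphi_x(t)}\big\rangle=-\big\|(\nabla f)(\varphi_x(t))\big\|^{2}\leq0,
\end{equation}
which is the desired identity and inequality.

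There is essentially no serious obstacle here: once the flow is known to satisfy the standard gradient equation at $\varphi_x(t)$, the statement is the one-line computation that already underlies Proposition 3.18 of \cite{BanHur_MorseHomology04} in the boundaryless setting. The only point requiring any care is verifying that the hypothesis $\varphi_x(t)\notin\partial M$ forces the interior (gradient) branch of the definition rather than the stationary branch $\partial_t\varphi_x=0$; this is immediate, since the stationary branch can only be active when $\varphi_x(t)\in\partial M$.
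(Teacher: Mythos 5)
Your proof is correct and is essentially the paper's own approach: the paper gives no argument beyond the bracketed citation of Proposition 3.18 in the Morse-homology reference, which is exactly this chain-rule computation $\frac{\mathrm{d}}{\mathrm{d}t}f(\varphi_x(t))=\langle\nabla f,-\nabla f\rangle=-\|\nabla f\|^2$, and your added observation — that $\varphi_x(t)\notin\partial M$ selects the gradient branch of the piecewise flow definition rather than the stationary branch — is precisely the reduction the paper's attribution implicitly relies on. Nothing is missing.
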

The next lemma is analogous to lemma \ref{lem:stable_mnfold_are_open_sets},
for the case with boundary.
\begin{lem}
\noindent \label{lem:stable_mnfold_are_open_sets-bdry-case}Let $M$
be a manifold with boundary as above. Let $f$ be a Morse-eigenfunction
on $M$ with the extension property. Let $p\in\Cr$ and let $\lambda_{p}$
be the Morse index of $f$ at $p$. The intersection of the stable
(unstable) manifold with the interior of $M$, $W^{s}(p)\cap\mathrm{int}M$
($W^{u}(p)\cap\mathrm{int}M$) is an open simply connected set of
dimension $2-\lambda_{p}$ ($\lambda_{p}$).\end{lem}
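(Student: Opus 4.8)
The plan is to deduce the statement from the boundaryless stable/unstable manifold theorem, Lemma \ref{lem:stable_mnfold_are_open_sets}, by means of the extension of $f$ described above. Since $g$ is flat and $\partial M$ is piecewise smooth with right angles, I would first extend $f$ to a Morse eigenfunction $\tilde f$ on an open set $\tilde M\subset\R^{2}$ with $\overline{M}\subset\tilde M$: across each smooth (hence straight) boundary arc one reflects $f$ oddly, which is legitimate for a Dirichlet eigenfunction of the flat Laplacian, and at a corner of interior angle $\pi/2$ exactly four reflected copies close up around the corner into a full angle $2\pi$, so the pieces glue to a smooth eigenfunction near the corner. This is precisely where the angle $\pi/2$ is essential and where angles $\pi/n$ with $n>2$ would fail. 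The extension preserves the critical set and the Morse indices: interior critical points are untouched, while each $p\in\Sd\cap\partial M$ becomes an interior critical point of $\tilde f$; moreover no extremum lies on $\partial M$, since there $f=0$ whereas the extrema of $f$ are nonzero. Applying Lemma \ref{lem:stable_mnfold_are_open_sets} to $\tilde f$ on $\tilde M$ then yields that the extended stable and unstable manifolds $\tilde{W}^{s}(p)$ and $\tilde{W}^{u}(p)$ are smoothly embedded open disks of dimensions $2-\lambda_{p}$ and $\lambda_{p}$.

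Next I would identify $W^{s}(p)\cap\mathrm{int}M$, defined through the boundary-adapted flow $\varphi$, with $\tilde{W}^{s}(p)\cap\mathrm{int}M$. The key point is that while a trajectory of $\varphi$ stays in $\mathrm{int}M$ it solves $\dot\gamma=-\nabla\tilde f$, so interior $\varphi$-lines are $\tilde f$-gradient lines, and by Lemma \ref{lem:function_decreases_along_flow_line} the value $f=\tilde f$ is strictly decreasing along them. Hence a forward $\varphi$-trajectory through an interior point can reach $\partial M$ only across an arc on which $f>0$ just inside $M$, where it halts, whereas on the complementary arcs the flow points inward. Combining this monotonicity with the fact that the odd reflection sends minima of $f$ to maxima of $\tilde f$ in the reflected region, one checks that an interior point whose $\tilde f$-trajectory leaves $\overline{M}$ cannot converge to an interior critical point; consequently the forward trajectory of every $y\in W^{s}(p)\cap\mathrm{int}M$ stays in $\mathrm{int}M$, and $W^{s}(p)\cap\mathrm{int}M=\tilde{W}^{s}(p)\cap\mathrm{int}M$.

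To conclude that this set is an embedded open disk I would use the forward flow as a deformation retraction. Embeddedness is inherited from that of $\tilde{W}^{s}(p)$, so it only remains to see that $W^{s}(p)\cap\mathrm{int}M$ is connected and contractible of the correct dimension. Since forward trajectories of points in this set remain in $\mathrm{int}M$ and converge to $p$, the forward flow, after rescaling time to $[0,1]$ with $p$ as the endpoint value, gives a deformation retraction of $W^{s}(p)\cap\mathrm{int}M$ onto $p$ within itself. For an interior minimum this exhibits a contractible open subset of $\R^{2}$, i.e.\ a $2$-disk; for a maximum it is the single point $\{p\}$; and for a saddle -- whether interior or, after reflection, on $\partial M$ -- the one-dimensional $\tilde{W}^{s}(p)$ meets $\mathrm{int}M$ in a connected open arc, a $1$-disk. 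The statement for $W^{u}(p)$ follows verbatim with time reversed.

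The main obstacle is exactly the behaviour of $\varphi$ at $\partial M$ and at the corners, which is what distinguishes this case from Lemma \ref{lem:stable_mnfold_are_open_sets}: one must verify both that $W^{s}(p)\cap\mathrm{int}M$ genuinely coincides with $\tilde{W}^{s}(p)\cap\mathrm{int}M$ rather than a proper flow-invariant subset of it, and that the intersection stays connected and simply connected after cutting the embedded disk $\tilde{W}^{s}(p)$ along $\partial M$. I expect the monotonicity of $f$ along interior flow lines together with the forward-flow retraction onto $p$ to control both issues, while the corners require only the remark that $\tilde f$ is smooth there, so that $\partial M$ carries no critical points of $\tilde f$ beyond the saddles already accounted for.
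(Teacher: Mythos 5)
Your overall route is the same as the paper's: extend $f$ across $\partial M$ to a Morse function $\tilde f$ on a planar neighbourhood of $M$ (the paper takes this extension as given from the setup of the section; your reflection construction makes it explicit), apply Lemma \ref{lem:stable_mnfold_are_open_sets} to $\tilde f$, observe that the boundary-adapted flow and the planar gradient flow coincide on $\mathrm{int}M$, and use monotonicity of $f$ along flow lines (Lemma \ref{lem:function_decreases_along_flow_line}) together with $\left.f\right|_{\partial M}=0$ to control how stable manifolds can meet $\partial M$. The one genuine variation is the minimum case: the paper proves path-connectedness of $\widetilde{W}^{s}(p)\cap\mathrm{int}M$ by concatenating two flow lines at $p$ and then obtains simple connectedness from the Mayer--Vietoris argument of Theorem \ref{thm-1}\eqref{enu:thm1-simply-connected}, whereas you get contractibility directly from a time-rescaled forward-flow retraction onto $p$. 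That variant is legitimate (an open, contractible planar set is a disk), but it costs an extra verification the paper avoids: continuity of the rescaled homotopy at the endpoint $s=1$, i.e.\ locally uniform convergence of the flow to $p$ on the stable manifold.

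Two places in your write-up are genuine gaps rather than omitted routine details. First, the pivotal identification $W^{s}(p)\cap\mathrm{int}M=\widetilde{W}^{s}(p)\cap\mathrm{int}M$ rests on the claim that a trajectory leaving $\overline{M}$ cannot converge to an interior critical point, and you only gesture at this (``one checks\dots''); moreover the remark about odd reflection turning minima into maxima is not what makes it work. The actual mechanism, which the paper spells out, is: such a trajectory would cross $\partial M$ at two points, at both of which $\tilde f=0$; since $\tilde f$ is non-increasing along the flow, $\tilde f\equiv 0$ on the arc between them, hence $\nabla\tilde f\equiv 0$ there, producing a continuum of critical points and contradicting that $\tilde f$ is Morse. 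Second, you claim connectivity of $\widetilde{W}^{s}(p)\cap\mathrm{int}M$ also for saddles $p\in\partial M$, but your retraction argument cannot deliver it there: $p\notin\mathrm{int}M$, so the homotopy does not stay inside the set. For boundary saddles a separate local argument is needed, for instance: $\tilde f(p)=0$ and $\tilde f$ strictly decreases along non-constant flow lines, so $\tilde f>0$ on each of the two open half-arcs of $\widetilde{W}^{s}(p)$, whence neither half-arc ever meets $\partial M$ and each lies entirely in $\mathrm{int}M$ or entirely outside $\overline{M}$; and the Hessian eigendirections at $p$ are transversal to $\partial M$ (the tangential second derivative vanishes because $f\equiv 0$ along the boundary), so exactly one half-arc lies inside, giving a single embedded open interval. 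With these two repairs your proof is complete and coincides in all essentials with the paper's.
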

\begin{proof}
We prove the statement above separately for the three different cases,
$\lambda_{p}=2,1,0$ and only for $W^{s}(p)\cap\mathrm{int}M$. The
first case ($\lambda_{p}=2$) holds due to the fact that extrema of
eigenfunctions with Dirichlet boundary conditions belong to the interior
of $M$, therefore lemma \ref{lem:stable_mnfold_are_open_sets} applies
to this case and $W^{s}\left(p\right)\cap\textrm{Int}M=\left\{ p\right\} $.\\
 For the second and third cases ($\lambda_{p}=1,0$), we use the extension
property and extend the eigenfunction to a smooth Morse function $\tilde{f}$
on $\mathbf{M}$. We now prove the $\lambda_{p}=1$ case. Denote the
stable manifold of $p$ with respect to $\tilde{f}$ by $\widetilde{W}^{s}\left(p\right)$.
This stable manifold is defined with respect to the standard gradient
flow on $\mathbf{M}$, which we denote by $\tilde{\varphi}:\R\times\mathbf{M}\rightarrow\mathbf{M}.$
Note that the flows $\tilde{\varphi}$ and $\varphi$ coincide in
the interior of $M$. By lemma \ref{lem:stable_mnfold_are_open_sets}
the stable manifold $\widetilde{W}^{s}\left(p\right)$ is an embedded
open interval in $\mathbf{M}$. If its intersection with the interior
of $M$ is connected, this would imply the claim. If $\widetilde{W}^{s}(p)\cap\mathrm{int}M$
is not connected, then one of the integral curves in $\widetilde{W}^{s}\left(p\right)$
(i.e., $\left\{ \tilde{\varphi}\left(t,x\right)\right\} _{t\in\R}$
which is contained in $\widetilde{W}^{s}\left(p\right)$) must intersect
$\partial M$ at least at two points. Denote these points on the boundary
by $x_{1},x_{2}$ and let $t_{2}>0$ such that $x_{2}=\tilde{\varphi}\left(t_{2,}x_{1}\right)$.
According to lemma \ref{lem:function_decreases_along_flow_line},
the values of $\tilde{f}$ decrease monotonically along the flow line,
$\left\{ \tilde{\varphi}\left(x_{1};t\right)\right\} _{0\leq t\leq t_{2}}$.
As $\tilde{f}$ vanishes at both $x_{1}$ and $x_{2}$, we conclude
from lemma \ref{lem:function_decreases_along_flow_line} that $\frac{\textrm{d}}{\textrm{d}t}f\left(\tilde{\varphi}\left(x;t\right)\right)=-\left\Vert \left(\nabla f\right)\left(\tilde{\varphi}\left(x_{1},t\right)\right)\right\Vert ^{2}=0~~~\forall0\leq t\leq t_{2}$.
The existence of non-isolated critical points contradicts $\tilde{f}$
being a Morse function and finishes the proof of the second claim.\\
 To prove the third case ($\lambda_{p}=0$), let $p\in\Min$. Then
the stable manifold $\widetilde{W}^{s}(p)$ is an embedded open disk
in $\mathbf{M}$ and in particular simply connected. Since $M$ is
simply connected, we may use the same argument as in the proof of
theorem \ref{thm:topological_properties_no_boundary}\eqref{enu:thm-no-bdry-simply-connected}
to conclude that $\widetilde{W}^{s}(p)\cap\mathrm{int}M$ is simply
connected if it is path connected. The set $\widetilde{W}^{s}(p)\cap\mathrm{int}M$
is indeed path connected as $x,y\in\widetilde{W}^{s}(p)\cap\mathrm{int}M$
are connected by the path $\overline{\left\{ \tilde{\varphi}\left(t,x\right)\right\} _{t\geq0}}\cup\overline{\left\{ \tilde{\varphi}\left(t,y\right)\right\} _{t\geq0}}$.
This holds since the gradient flow lines $\left\{ \tilde{\varphi}\left(t,x\right)\right\} _{t\geq0}$,
$\left\{ \tilde{\varphi}\left(t,y\right)\right\} _{t\geq0}$ are fully
contained in $\textrm{int}M$ (as is explained in the previous paragraph)
and $p$ is a common point in the closures of these ($\lim_{t\rightarrow-\infty}\tilde{\varphi}\left(t,x\right)=\lim_{t\rightarrow-\infty}\tilde{\varphi}\left(t,y\right)=p$).
$\widetilde{W}^{s}(p)\cap\mathrm{int}M$ is therefore a simply connected
open set and hence homeomorphic to a two-dimensional disk.
\end{proof}

\subsection{Neumann domains in the boundary case}

\noindent Let $M$ be a connected compact 2-manifold with boundary
as described in the beginning of this section and consider a Morse-eigenfunction
$f$ with the extension property, which obeys Dirichlet boundary conditions
on $\partial M$. The definition of the set of Neumann lines, $\Nd$,
in this case is unaltered and is still given by definition \ref{def:ND_using_saddles}.
The definition of Neumann domains, however, should be modified as
follows.
\begin{defn}
\label{def:ND_using_CW_complex_Dirichlet_boundary}A \emph{Neumann
domain} of $f$ as above is a connected component of one of the following
sets
\begin{enumerate}
\item $\Omega_{p,q}(f)=W^{s}\left(p\right)\cap W^{u}\left(q\right),$ where
$p\in\Max,\,q\in\Min$
\item $\Omega_{p,\circ}(f)=W^{s}\left(p\right)\cap\left(\bigcup_{y\in\partial M\backslash\Sd}W^{u}\left(y\right)\right),$
where $p\in\Min$
\item $\Omega_{\circ,q}(f)=\left(\bigcup_{y\in\partial M\backslash\Sd}W^{s}\left(y\right)\right)\cap W^{u}\left(q\right),$
where $q\in\Max$
\end{enumerate}
Neumann domains of type (1) are called \emph{inner Neumann domains}
and those of types (2) and (3) are called \emph{boundary Neumann domains}.
\end{defn}
\noindent Similarly to lemma \ref{lem:decomp_to_stble_mnfolds}, we
have an analogue decomposition of $M$.
\begin{lem}
\label{lem:decomp_to_stble_mnfolds-bdry_case} Let $M$ and $f$ be
as above, then we have the following disjoint decompositions
\begin{equation}
\overline{M}=\left\{ \bigsqcup_{x\in\Cr}W^{u}\left(x\right)\right\} \bigsqcup\left\{ \bigsqcup_{y\in\partial M\backslash\Sd}W^{u}\left(y\right)\right\} .
\end{equation}
Similarly,
\begin{equation}
\overline{M}=\left\{ \bigsqcup_{x\in\Cr}W^{s}\left(x\right)\right\} \bigsqcup\left\{ \bigsqcup_{y\in\partial M\backslash\Sd}W^{s}\left(y\right)\right\} .
\end{equation}
\end{lem}
\begin{proof}
Both decompositions follow as each point belongs to a unique (un)stable
manifold and there are no extremal points on a Dirichlet boundary.
\end{proof}
\noindent The following lemma is the analogue of lemma \ref{lem:flow_lim_is_critic_pt}.
\begin{lem}
\label{lem:flow_lim_is_critic_pt_or_bdry}Let $x\in M$. Then both
limits $\lim_{t\rightarrow\infty}\varphi\left(t,x\right)$ and $\lim_{t\rightarrow-\infty}\varphi\left(t,x\right)$
exist and each is either a critical point of $f$ or an element of
$\partial M$, i.e., $\lim_{t\rightarrow\pm\infty}\varphi\left(t,x\right)\in\Cr\cup\partial M$..\end{lem}
\begin{proof}
Let $x\in M$. If $\left\{ \varphi\left(t,x\right)\right\} _{t\in\R}\cap\partial M=\emptyset$
then lemma \ref{lem:flow_lim_is_critic_pt} applies and we get that
the limits $\lim_{t\rightarrow\infty}\varphi_{x}\left(t,x\right)$
and $\lim_{t\rightarrow-\infty}\varphi\left(t,x\right)$ exist and
both belong to $\Cr$ (and it might be that any of these limits belongs
to the boundary, $\partial M$). Otherwise, there exists $t_{0}\in\R$
such that $\varphi\left(t_{0},x\right)=y$ and $y\in\partial M$.
Assume without loss of generality that $t_{0}<0$ . Note that due
to the reversibility of the flow, the gradient cannot vanish at $y$,
i.e., $\left.\nabla f\right|_{y}\neq0$. We conclude that $y$ cannot
be a corner of the boundary as this would imply $y\in\Sd$. Therefore
$y$ belongs to the smooth part of the boundary and $\left.\nabla f\right|_{y}$
is orthogonal to the boundary, which is a level set of $f$ . By the
definition of the flow, we get that $\forall t<t_{0},~\varphi\left(t,x\right)=y$
and therefore $\lim_{t\rightarrow-\infty}\varphi\left(t,x\right)=y$.
\end{proof}
\noindent The next two lemmata are the analogues of lemmata \ref{lem:boundary_of_stble_mnfld_of_saddle}
and \ref{lem:Neumann_nodes_contain_all_extrema}.
\begin{lem}
\label{lem:boundary_of_stble_mnfld_of_saddle-for_mnfold_with_bdry}
Let $r\in\Sd.$ Then $q\in\overline{W^{s}\left(r\right)}\backslash W^{s}\left(r\right)$
if and only if $q\in\Cr$ and $W^{s}\left(r\right)\cap W^{u}\left(q\right)\neq\emptyset$.
Similarly, $p\in\overline{W^{u}\left(r\right)}\backslash W^{u}\left(r\right)$
if and only if $p\in\Cr$ and $W^{u}\left(r\right)\cap W^{s}\left(p\right)\neq\emptyset$. \end{lem}
\begin{proof}
The proof of direction ($\Leftarrow$) is identical to that of lemma
\ref{lem:boundary_of_stble_mnfld_of_saddle}. The proof of the other
direction is only slightly modified (using lemmata \ref{lem:stable_mnfold_are_open_sets-bdry-case},
\ref{lem:flow_lim_is_critic_pt_or_bdry} which are analogous to lemmata
\ref{lem:stable_mnfold_are_open_sets}, \ref{lem:flow_lim_is_critic_pt})
and is not repeated. The only element of proof which we do mention
here concerns points on the boundary $\partial M$. Let $y\in\partial M\cap\overline{W^{s}\left(r\right)}$.
Then we have that $y\in W^{s}\left(r\right)$ if and only if $y\notin\Cr$.
In particular, if $y\in\partial M$ then $y\in\overline{W^{s}\left(r\right)}\backslash W^{s}\left(r\right)$
if and only if $y\in\Cr$.
\end{proof}
\noindent The main content of the following lemma (similarly to lemma
\ref{lem:Neumann_nodes_contain_all_extrema}) is that all extremal
points belong to the set of Neumann lines, $\Nd$. The proof is somewhat
more involved here than the proof of the analogous lemma in the non-boundary
case. Intuitively, this can be understood as following. Given an eigenfunction,
$f$, on the manifold with boundary $M$, we extend it to a Morse
function, $\tilde{f}$, on the manifold $\mathbf{M}$. Yet, it might
occur that $N(f)|_{M}\subsetneq N(\tilde{f})|_{M}$. Namely, some
Neumann lines of $\tilde{f}$ might be absent from those of $f$ even
if originally they had a non-empty intersection with $M$. This happens
exactly when a saddle point of $\tilde{f}$ lies outside $M$, but
has a stable or unstable manifold which intersects with $M$. In particular,
having less Neumann lines means that it is harder to guarantee that
all extremal points belong to the Neumann lines, in the boundary case.
Hence the difference in the complexity of the proofs.
\begin{lem}
\label{lem:Neumann_nodes_contain_all_extrema_bdry_case}If $\Nd\neq\emptyset$
then
\begin{equation}
\Nd=\left\{ \bigcup_{{r\in\Sd}}W^{s}(r)\cup W^{u}(r)\right\} \bigsqcup\Xt.
\end{equation}
\end{lem}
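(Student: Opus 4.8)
The plan is to establish the two inclusions. The inclusion $\Nd \subseteq \left\{ \bigcup_{r\in\Sd}W^{s}(r)\cup W^{u}(r)\right\} \coprod\Xt$ follows exactly as in the non-boundary case: by lemma \ref{lem:boundary_of_stble_mnfld_of_saddle-for_mnfold_with_bdry}, the closure of a saddle's stable (unstable) manifold can only add critical points, and every saddle already lies inside $\bigcup_{r\in\Sd}W^{s}(r)$, so the closure operation in definition \ref{def:ND_using_saddles} adds nothing beyond points of $\Xt$. The disjointness of the union with $\Xt$ is clear since extrema are not themselves saddle flow lines. The real work, as flagged in the motivating discussion, is proving the reverse inclusion $\Xt \subseteq \Nd$, i.e.\ that every extremum lies in the closure of some saddle's stable or unstable manifold.

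To prove $\Xt\subseteq\Nd$, I would fix a maximum $q$ (the minimum case being symmetric, working with $W^{u}$ instead of $W^{s}$) and argue by contradiction: assume no saddle $r$ satisfies $q\in\overline{W^{s}(r)}$, and derive $\Nd=\emptyset$. Using lemma \ref{lem:boundary_of_stble_mnfld_of_saddle-for_mnfold_with_bdry} together with lemma \ref{lem:decomp_to_stble_mnfolds-bdry_case}, the assumption yields a punctured neighbourhood $U\setminus\{q\}$ that meets no stable manifold of a saddle and contains no other extremum. The decomposition of lemma \ref{lem:decomp_to_stble_mnfolds-bdry_case} then forces $U\setminus\{q\}$ to be covered by stable manifolds of minima together with the boundary pieces $W^{s}(y)$ for $y\in\partial M\setminus\Sd$. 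Since these are pairwise disjoint open sets and stable manifolds of minima are connected open disks (lemma \ref{lem:stable_mnfold_are_open_sets-bdry-case}), the punctured neighbourhood must be covered by a single such set, forcing $q$ to be an isolated boundary component of a single stable manifold's closure whose union with $\{q\}$ exhausts $M$.

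The main obstacle, and the point where the boundary case genuinely differs from lemma \ref{lem:Neumann_nodes_contain_all_extrema}, is handling the extra stratification piece $W^{s}(y)$ for boundary points $y$. In the closed case the only alternative to $\Nd=\emptyset$ was $M=S^{2}$ with one extremum of each type; here the analogous conclusion must account for flow lines terminating on $\partial M$. I would argue that if $U\setminus\{q\}$ is covered by a single stable manifold $W^{s}(p)$ of a minimum $p$, then $\overline{W^{s}(p)}$ would have to be all of $M$ with $q$ as an isolated boundary point, which is incompatible with $M$ being a simply connected surface \emph{with} Dirichlet boundary, where $\left.f\right|_{\partial M}=0$ forces the gradient flow structure near $\partial M$ to be nontrivial and hence $\Sd\neq\emptyset$ whenever $\Nd\neq\emptyset$. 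The delicate bookkeeping is to ensure that the boundary strata $W^{s}(y)$ cannot conspire to cover $U\setminus\{q\}$ in a way that circumvents the existence of a saddle; this is where lemma \ref{lem:flow_lim_is_critic_pt_or_bdry}, which says flow limits land in $\Cr\cup\partial M$, must be invoked to control the accumulation behaviour near $q$.

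Assembling these, the contradiction $\Nd=\emptyset$ contradicts the hypothesis, so some saddle $r$ with $q\in\overline{W^{s}(r)}$ exists, giving $q\in\Nd$; the symmetric argument handles minima via $W^{u}$, and together they yield $\Xt\subseteq\Nd$. Combined with the first inclusion this proves the claimed equality, with the coproduct (disjointness) already justified above.
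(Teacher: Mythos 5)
Your first inclusion is fine and matches the paper, and your reduction of the reverse inclusion---covering a punctured neighbourhood of the maximum $q$ by stable manifolds of minima together with the boundary strata $W^{s}(y)$, $y\in\partial M\backslash\Sd$, then invoking connectedness to isolate a single covering set---is a reasonable skeleton that roughly parallels the paper's split according to whether $W^{u}(q)$ meets $\partial M$. The genuine gap is in your disposal of the boundary-strata case. The contradiction you offer there is circular: ``$f|_{\partial M}=0$ forces the gradient flow structure near $\partial M$ to be nontrivial and hence $\Sd\neq\emptyset$ whenever $\Nd\neq\emptyset$'' is vacuous, since $\Nd\neq\emptyset$ implies $\Sd\neq\emptyset$ trivially by definition \ref{def:ND_using_saddles}, and the existence of saddles \emph{somewhere} on $M$ is not what is needed. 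What must be shown is that some specific saddle $r$ (possibly lying on $\partial M$) satisfies $W^{s}(r)\cap W^{u}(q)\neq\emptyset$, i.e.\ that a flow line emanating from $q$ terminates at a saddle rather than at a regular boundary point; this is exactly the difficulty the paper flags in the paragraph preceding the lemma, where it warns that restricting to a subdomain can delete the saddles attached to Neumann lines, so that extrema are harder to connect to $\Nd$ in the boundary case.

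The paper's proof of this step is substantive and none of it appears in your proposal. It extends $f$ to a Morse function $\tilde{f}$ on $\R^{2}$, observes that $\partial\overline{W^{u}(q)}$ is compact, and takes the point $r$ where $f|_{\partial\overline{W^{u}(q)}}$ attains its maximum. If $r\notin\partial M$, the decomposition of lemma \ref{lem:decomp_to_stble_mnfolds-bdry_case} shows $\partial\overline{W^{u}(q)}$ is covered by minima and unstable manifolds of saddles, forcing $r\in\Sd$, and lemma \ref{lem:local_Neumann_set_near_saddle}\eqref{enu:lem_local_Neumann_near_saddle-1} gives transversality of $W^{s}(r)$ to $\partial\overline{W^{u}(q)}$, hence $W^{s}(r)\cap W^{u}(q)\neq\emptyset$. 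If $r\in\partial M$, one first rules out $f\equiv0$ on $\partial\overline{W^{u}(q)}$ (it would yield a continuum of critical points, contradicting $f$ being Morse), concludes $f<0$ there, hence $W^{u}(q)\cap f^{-1}(0)\neq\emptyset$; this nodal piece cannot be a closed curve (by monotonicity along flow lines, lemma \ref{lem:function_decreases_along_flow_line}, and the absence of other extrema in $W^{u}(q)$), so it must reach $\partial M$ at a boundary saddle $s$, and lemma \ref{lem:local_Neumann_set_near_saddle}\eqref{enu:lem_local_Neumann_near_saddle-2} then gives $W^{s}(s)\cap W^{u}(q)\neq\emptyset$. Your appeal to lemma \ref{lem:flow_lim_is_critic_pt_or_bdry} as the tool for the ``delicate bookkeeping'' cannot substitute for this: that lemma only locates flow limits in $\Cr\cup\partial M$ and says nothing about nodal lines, boundary saddles, or transversality, which are the ingredients that actually produce the required saddle. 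Also the other branch (the ``either $\partial M\subset W^{u}(q)$, whence $M=W^{u}(q)$ and $\Nd=\emptyset$'' alternative) is asserted rather than derived in your write-up; it needs the simple-connectedness of $W^{u}(q)\cap\mathrm{int}\,M$ from lemma \ref{lem:stable_mnfold_are_open_sets-bdry-case}.
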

\begin{proof}
This proof partly follows the lines of the one for lemma \ref{lem:Neumann_nodes_contain_all_extrema},
if we replace lemmata \ref{lem:stable_mnfold_are_open_sets}, \ref{lem:flow_lim_is_critic_pt}
and \ref{lem:boundary_of_stble_mnfld_of_saddle} by the analogous
lemmata \ref{lem:stable_mnfold_are_open_sets-bdry-case}, \ref{lem:flow_lim_is_critic_pt_or_bdry}
and \ref{lem:boundary_of_stble_mnfld_of_saddle-for_mnfold_with_bdry}.
However, this proof deviates at some point and additional arguments
are supplied. We observe that
\begin{equation}
\begin{aligned}\Nd & \subseteq\left\{ \bigcup_{{r\in\Sd}}W^{s}(r)\cup W^{u}(r)\right\} \bigcup\Cr\\
 & =\left\{ \bigcup_{{r\in\Sd}}W^{s}(r)\cup W^{u}(r)\right\} \bigsqcup\Xt,
\end{aligned}
\end{equation}
where the first line is a deduction from lemma \ref{lem:boundary_of_stble_mnfld_of_saddle-for_mnfold_with_bdry}
and the second equality holds as $\Cr\backslash\Xt=\Sd\subset\bigcup_{{r\in\Sd}}W^{s}(r)$.

We proceed to show that the relation above is an exact equality. Let
$q$ be a maximum of $f$. We show that $\exists r\in\Sd$ such that
$q\in\overline{W^{s}(r)}$. Similar arguments can be used to show
that if $p$ is a minimum of $f$ then $\exists r\in\Sd$ such that
$p\in\overline{W^{u}(r)}$ and in combination this proves the lemma.
Examine $W^{u}\left(q\right)$. If $\partial M\subset W^{u}\left(q\right)$
we conclude $\Sd=\emptyset$ and therefore also $\Nd=\emptyset$.
This conclusion owes to $W^{u}\left(q\right)\cap\textrm{int}M$ being
simply connected (lemma \ref{lem:stable_mnfold_are_open_sets-bdry-case}),
so that its boundary equals $\partial M$ and therefore $M=W^{u}\left(q\right)$,
which implies $\Sd=\emptyset$.

We now consider the case $W^{u}(q)\cap\partial M=\emptyset$. In particular,
the unstable manifold of $q$ is contained in $M$ and therefore coincides
with that of the extension $\widetilde{W}^{u}(q)$, $W^{u}(q)=\widetilde{W}^{u}(q)$.
For $\mathbf{M}$ we can proceed as in the proof of lemma \ref{lem:Neumann_nodes_contain_all_extrema}
and deduce the existence of a saddle point $r\in\mathscr{S}(\tilde{f})$
with $q\in\overline{\widetilde{W}^{s}(r)}$. In particular, there
is a gradient flow line connecting $q$ and $r$. This gradient flow
line is contained in $W^{u}(q)$ by definition and in turn it is contained
in $M$. Therefore $r\in\overline{M}$ is the desired saddle point.

If $W^{u}(q)\cap\partial M\neq\emptyset$, we consider $\partial W^{u}\left(q\right)$,
taking the boundary with respect to the topology of $\overline{M}$.
$\partial W^{u}\left(q\right)$ is not empty as we have shown that
$W^{u}\left(q\right)\subsetneq\overline{M}$. $\partial W^{u}\left(q\right)$
is a compact set so that $f$ attains a maximum on it, at some $r\in\partial W^{u}\left(q\right)$.
We first assume that $r\notin\partial M$ and show that $\left.\nabla f\right|_{r}=0$.
First, note that $\partial W^{u}\left(q\right)$ is a one-dimensional
curve being part of the boundary of an embedded two-dimensional disk,
by lemma \ref{lem:stable_mnfold_are_open_sets-bdry-case}. If $\partial W^{u}\left(q\right)$
is smooth at $r$, then, since $r$ is a local maximum of $\left.f\right|_{\partial W^{u}\left(q\right)}$,
we get that $\left.\nabla f\right|_{r}$ is orthogonal to the tangent
of $\partial W^{u}\left(q\right)$ at $r$. If $\left.\nabla f\right|_{r}\neq0$
this implies that $\left\{ \varphi\left(t,r\right)\right\} _{t\in\R}\cap W^{u}\left(q\right)\neq\emptyset$
and therefore $\lim_{t\rightarrow-\infty}\varphi\left(t,r\right)=q$,
contradicting $r\in\partial W^{u}\left(q\right)$. If $\partial W^{u}\left(q\right)$
has a corner at $r,$ $\left.\nabla f\right|_{r}$ is orthogonal to
both right and left tangents of $\partial W^{u}\left(q\right)$ at
$r$ and once again $\left.\nabla f\right|_{r}=0$. If $r\in\partial M$
then $\left.\nabla f\right|_{r}$ is orthogonal to both $\partial W^{u}\left(q\right)$
and $\partial M$ (as the latter is a level set) so that $\left.\nabla f\right|_{r}=0.$
We conclude that $r\in\Cr$. Obviously, $r$ cannot be a minimum and
it also cannot be a maximum since this will yield $W^{u}\left(q\right)\cap W^{u}\left(r\right)\neq\emptyset$.
Therefore, $r\in\Sd$. $W^{u}\left(r\right)$ cannot intersect $W^{u}\left(q\right)$
and therefore it is tangential to $\partial W^{u}\left(q\right)$
at $r.$ From the local structure of $\Nd$ near $r$, as given in
lemma \ref{lem:local_Neumann_set_near_saddle}\eqref{enu:lem_local_Neumann_near_saddle-1}
we deduce that $W^{s}\left(r\right)$ is transversal to $\partial W^{u}\left(q\right)$.
Therefore, $W^{s}\left(r\right)\cap W^{u}\left(q\right)\neq\emptyset$
and we conclude from lemma \ref{lem:boundary_of_stble_mnfld_of_saddle-for_mnfold_with_bdry}
that $q\in\overline{W^{s}(r)}$ as desired.

It is left to consider the case $r\in\partial M$. If there is some
other isolated maximum on $\partial W^{u}\left(q\right)$, we pick
it as $r$ and proceed as before. Otherwise, either $\left.f\right|_{\partial W^{u}\left(q\right)}\equiv0$
or $\left.f\right|_{\partial W^{u}\left(q\right)}<0$. The case $\left.f\right|_{\partial W^{u}\left(q\right)}\equiv0$
can be ruled out as then for $x\in\partial W^{u}\left(q\right)$ we
have $\nabla f\left(x\right)\bot\partial W^{u}\left(q\right)$ which
either contradicts $x\in\partial W^{u}\left(q\right)$ or implies
$x\in\Sd$. We therefore get that $\partial W^{u}\left(q\right)\subset\Sd$
which contradicts $f$ being a Morse function. We conclude that $f\left(x\right)<0~~\forall x\in\partial W^{u}\left(q\right)$.
The strict negative sign of $f$ on the boundary of the unstable manifold
in combination with $f\left(q\right)>0$ implies
\begin{equation}
W^{u}\left(q\right)\cap f^{-1}\left(0\right)\neq\emptyset.
\end{equation}
This set cannot contain a closed nodal line in the interior of $M$
for the following reason. In case the maximum is contained in the
corresponding nodal domain, this would imply that gradient flow lines
connecting the maximum and the boundary, $\partial M$, attain the
zero value twice, which contradicts the monotonicity of $f$ along
gradient flow lines (lemma \ref{lem:function_decreases_along_flow_line}).
Assuming that the corresponding nodal domain does not contain the
maximum, then it contains another extremal point of the eigenfunction.
But no extremal point other than $q$ is an element of the unstable
manifold of $q$, by definition. We therefore deduce that the nodal
set $f^{-1}\left(0\right)$ has a non-empty intersection with the
boundary,
\begin{equation}
f^{-1}\left(0\right)\cap\partial M\cap\overline{W^{u}\left(q\right)}\neq\emptyset,
\end{equation}
as nodal lines are either closed or end at the boundary. We pick $s\in f^{-1}\left(0\right)\cap\partial M\cap\overline{W^{u}\left(q\right)}$
and claim that this is the required saddle point. The fact that it
is a saddle point follows immediately since nodal lines intersect
the boundary of $M$ at saddle points of $f$ on the boundary. The
local structure of $\Nd$ and $f^{-1}\left(0\right)$ in the vicinity
of $s$, as described in lemma \ref{lem:local_Neumann_set_near_saddle}\eqref{enu:lem_local_Neumann_near_saddle-2}
implies that $W^{s}\left(s\right)\cap W^{u}\left(q\right)\neq\emptyset$
and we conclude from lemma \ref{lem:boundary_of_stble_mnfld_of_saddle-for_mnfold_with_bdry}
that $q\in\overline{W^{s}(s)}$ as desired.
\end{proof}
\noindent Finally, the analogue of proposition \ref{prop:complementary_property_no_boundary}
is
\begin{prop}
\noindent \label{prop:complementary_property_with_boundary} If $\Nd\neq\emptyset$
then the following disjoint decomposition of the manifold holds.
\begin{equation}
\begin{aligned}\overline{M}= & \left\{ \bigsqcup_{\begin{array}{c}
{\textrm{\ensuremath{p\in\Min}}\atop {\textrm{\ensuremath{q\in\Max}}}}\end{array}}\!\!\!\!\!\!\Omega_{p,q}(f)\right\} \bigsqcup\left\{ \bigsqcup_{\begin{array}{c}
{\textrm{\ensuremath{p\in\Min}}\atop {}}\end{array}}\!\!\!\!\!\!\Omega_{p,\circ}(f)\right\} \bigsqcup\left\{ \bigsqcup_{\begin{array}{c}
{{\textrm{\ensuremath{q\in\Max}}}\atop }\end{array}}\!\!\!\!\!\!\Omega_{\circ,q}(f)\right\} \,\,\bigsqcup\,\,N\left(f\right).\end{aligned}
\end{equation}
\end{prop}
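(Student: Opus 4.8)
The plan is to mirror the strategy of proposition~\ref{prop:complementary_property_no_boundary}, replacing the non-boundary lemmata by their boundary analogues. The starting point is a \emph{fine} disjoint decomposition of $M$ obtained by sorting every point according to the two endpoints of its flow line. By lemma~\ref{lem:flow_lim_is_critic_pt_or_bdry}, for each $x\in M$ both limits
\[
x_{-}:=\lim_{t\rightarrow\inf I_{x}}\varphi_{x}(t),\qquad x_{+}:=\lim_{t\rightarrow\sup I_{x}}\varphi_{x}(t)
\]
exist and lie in $\Cr\cup\partial M$. Since $f$ decreases along the flow (lemma~\ref{lem:function_decreases_along_flow_line}) and $f$ is a Dirichlet eigenfunction, so that maxima are positive, minima negative, and $\left.f\right|_{\partial M}=0$, the backward endpoint $x_{-}$ can only be a maximum, a saddle, or a boundary point, and the forward endpoint $x_{+}$ can only be a minimum, a saddle, or a boundary point. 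These endpoints are unique, which is what will give the disjointness of the decomposition.

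First I would record the fine decomposition
\[
M=\Big[\coprod_{p\in\Min,\,q\in\Max}\!\!\!\Omega_{p,q}(f)\Big]\coprod\Big[\coprod_{p\in\Min}\!\Omega_{p,\circ}(f)\Big]\coprod\Big[\coprod_{q\in\Max}\!\Omega_{\circ,q}(f)\Big]\coprod\Big[\coprod_{r\in\Sd}\big\{W^{s}(r)\cup W^{u}(r)\big\}\Big]\coprod\Xt,
\]
and verify it by separation into cases on the pair $(x_{-},x_{+})$, exactly as in proposition~\ref{prop:complementary_property_no_boundary}. If $x\in\Cr$ then either $x\in\Xt$ or $x\in\Sd\subset W^{s}(r)$. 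Otherwise, if $x_{-}$ and $x_{+}$ are both extrema they are necessarily a maximum and a minimum, so $x$ lands in an inner Neumann domain $\Omega_{x_{+},x_{-}}$ of type~(1) in definition~\ref{def:ND_using_CW_complex_Dirichlet_boundary}. If exactly one endpoint is an extremum and the other lies in $\partial M\backslash\Sd$, then $x$ lands in a boundary Neumann domain of type~(2) or~(3), according to which endpoint is extremal. Finally, if at least one endpoint is a saddle, then $x\in\bigcup_{r\in\Sd}W^{s}(r)\cup W^{u}(r)$.

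The one genuinely new case to dispose of is the possibility that both $x_{-}$ and $x_{+}$ lie in $\partial M\backslash\Sd$. Here I would invoke lemma~\ref{lem:function_decreases_along_flow_line}: a non-constant flow line with both endpoints on the Dirichlet boundary would attain the value $f=0$ at both ends while $f$ decreases strictly in between, which is impossible. Hence this case is empty, and the fine decomposition is exhaustive and disjoint. Boundary points $y\in\partial M\backslash\Sd$ are themselves correctly absorbed, since at such a point $\left.\na f\right|_{y}\neq0$ is normal to the level boundary, so $y$ belongs either to $W^{u}(y)$ (flow emanating inward, landing in some $\Omega_{p,\circ}$) or to $W^{s}(y)$ (flow terminating, coming from some $\Omega_{\circ,q}$), while $y\in\Sd$ is placed in the saddle term; the underlying set-theoretic identities use lemma~\ref{lem:boundary_of_stble_mnfld_of_saddle-for_mnfold_with_bdry} and the decomposition in lemma~\ref{lem:decomp_to_stble_mnfolds-bdry_case}.

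The final step is to identify the last two terms of the fine decomposition with $\Nd$. This is precisely lemma~\ref{lem:Neumann_nodes_contain_all_extrema_bdry_case}, which gives $\Nd=\big\{\bigcup_{r\in\Sd}W^{s}(r)\cup W^{u}(r)\big\}\coprod\Xt$ whenever $\Nd\neq\emptyset$. Substituting this identity collapses the saddle-manifold term together with $\Xt$ into $\Nd$ and yields the claimed decomposition. I expect the main obstacle to be not any single deep step but the careful bookkeeping of the boundary endpoints and the verification of disjointness; the real work has already been front-loaded into lemma~\ref{lem:Neumann_nodes_contain_all_extrema_bdry_case} (the boundary analogue of lemma~\ref{lem:Neumann_nodes_contain_all_extrema}), whose proof is the delicate part, together with lemmata~\ref{lem:flow_lim_is_critic_pt_or_bdry} and~\ref{lem:boundary_of_stble_mnfld_of_saddle-for_mnfold_with_bdry}.
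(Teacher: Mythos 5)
Your proposal is correct and takes essentially the same route as the paper: the same fine disjoint decomposition of $M$ into inner Neumann domains, the two types of boundary Neumann domains, saddle (un)stable manifolds and $\Xt$, verified by separation into cases on the flow-line endpoints, and then collapsed via lemma \ref{lem:Neumann_nodes_contain_all_extrema_bdry_case} into the claimed statement. The only difference is one of detail: you spell out the endpoint bookkeeping explicitly (in particular ruling out flow lines with both endpoints on $\partial M$ via monotonicity, and checking how points of $\partial M\backslash\Sd$ are absorbed), whereas the paper compresses this into a reference to the case analysis of proposition \ref{prop:complementary_property_no_boundary} together with the absence of extrema on the Dirichlet boundary.
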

\begin{proof}
\noindent Note the following disjoint decomposition of the manifold
\begin{equation}
\begin{aligned}M & =\left\{ \bigsqcup_{{{p\in\Min}\atop {q\in\Max}}}\!\!\!\left[W^{s}\left(p\right)\cap W^{u}\left(q\right)\right]\right\} \\
 & \quad\bigsqcup\left\{ \bigsqcup_{{{p\in\Min}\atop {}}}\left[W^{s}\left(p\right)\cap\left(\bigcup_{{y\in\partial M\backslash\Sd\atop {}}}W^{u}\left(y\right)\right)\right]\right\} \\
 & \quad\bigsqcup\left\{ \bigsqcup_{{\ensuremath{q\in\Max}\atop {}}}\,\,\,\!\!\!\!\left[\left(\bigcup_{{y\in\partial M\backslash\Sd\atop {}}}W^{s}\left(y\right)\right)\cap W^{u}\left(q\right)\right]\right\} \\
 & \quad\bigsqcup\left\{ \bigsqcup_{{{r\in\Sd}\atop }}\left[W^{s}(r)\cup W^{u}(r)\right]\right\} \bigsqcup\Xt,
\end{aligned}
\end{equation}
whose validity follows from separation into cases (see beginning of
the proof of proposition \ref{prop:complementary_property_no_boundary})
together with $f$ having no extremal points on the Dirichlet boundary.
The proposition now follows from definition \ref{def:ND_using_CW_complex_Dirichlet_boundary}
and lemma \ref{lem:Neumann_nodes_contain_all_extrema_bdry_case}.
\end{proof}
\noindent The following structural theorem provides the same results
as theorem \ref{thm:topological_properties_no_boundary} for inner
Neumann domains and analogous results for the boundary Neumann domains.
\begin{thm}
\noindent \label{thm-2} Let $M$ be a simply connected, compact subset
of a compact, closed 2-dimensional smooth manifold $\mathbf{M}$.
Let $\partial M$ be piecewise smooth , all of whose angles are strictly
positive and let $g$ be the metric on $M$. Let $f$ be a Morse eigenfunction
of $-\De_{g}$ with the extension property, which obeys Dirichlet
boundary conditions and is such that $\Sd\neq\emptyset$.\\
 The following holds.
\begin{enumerate}
\item $\Cr\subset\Nd$ \label{enu:thm2-referring_to_thm1_1}
\end{enumerate}

\subparagraph*{\uline{Inner Neumann domains}\global\long\def\theenumi{\roman{enumi}}
}
\begin{enumerate}[resume]
\item Claims \eqref{enu:thm-no-bdry-extremal-points} and \eqref{enu:thm-no-bdry-simply-connected}-\eqref{enu:thm-no-bdry-level-sets-3}
of theorem \ref{thm:topological_properties_no_boundary} hold for
all inner Neumann domains of $f$.\label{enu:thm2-referring_to_thm1_2}~
\end{enumerate}

\subparagraph*{\uline{Boundary Neumann domains}\global\long\def\theenumi{\roman{enumi}}
}

Let $p\in\Min$ and let $\Omega$ be a connected component of $\Omega_{p,\circ}(f)$.
Then
\begin{enumerate}[resume]
\item $\partial\Omega\cap\Xt=\{p\}$\label{enu:thm2-sec-1}.
\item $\Omega$ is simply connected. \label{enu:thm2-simply-connected}
\item $\left.f\right|_{\Omega\backslash\partial M}<0$ and therefore $\Omega\cap f^{-1}(0)=\Omega\cap\partial M$\label{enu:thm2-sec-3}.
\end{enumerate}
Analogous claims hold for boundary Neumann domains of maxima.\end{thm}
\begin{proof}
Claims \eqref{enu:thm-no-bdry-critical-points},\eqref{enu:thm-no-bdry-extremal-points}
here are proven identically as in theorem \ref{thm:topological_properties_no_boundary}
(with lemma \ref{lem:Neumann_nodes_contain_all_extrema_bdry_case}
as the analogue of lemma \ref{lem:Neumann_nodes_contain_all_extrema}).
Claim \eqref{enu:thm2-simply-connected} for boundary Neumann domains
is also proven as its analogue (claim \eqref{enu:thm-no-bdry-simply-connected})
in theorem \ref{thm:topological_properties_no_boundary}.\\
 {[}Proof of \eqref{enu:thm2-sec-3}{]} Assume by contradiction that
there exists $x\in\Omega\backslash\partial M$ such that $f\left(x\right)\geq0$.
Consider the set $\left\{ \varphi\left(t,x\right)\right\} _{t>0}$.
By definition of $\Omega$, $\lim_{t\rightarrow-\infty}\varphi\left(t,x\right)\in\partial M$
and therefore $\lim_{t\rightarrow-\infty}f\left(\varphi\left(t,x\right)\right)=0$.
Lemma \ref{lem:function_decreases_along_flow_line} states that $f$
cannot increase along gradient flow lines and therefore $f\left(\varphi\left(t,x\right)\right)=0~\forall t\leq0$
and we conclude that $\frac{\textrm{d}}{\textrm{d}t}f\left(\varphi\left(t,x\right)\right)=-\left\Vert \left(\nabla f\right)\left(\varphi\left(t,x\right)\right)\right\Vert ^{2}=0$
for $t\leq0\,$. We get a set, $\left\{ \varphi\left(t,x\right)\right\} _{t\leq0}$,
of non-isolated critical points of $f$, in contradiction to $f$
being a Morse function. We therefore have $\left.f\right|_{\Omega\backslash\partial M}<0$
and conclude $\Omega\cap f^{-1}(0)=\Omega\cap\partial M$. \\
 {[}Proof of \eqref{enu:thm2-sec-1}{]} Proving that $\partial\Omega\cap\Min=\left\{ p\right\} $
is done similarly to the analogue claim in theorem \ref{thm:topological_properties_no_boundary}
(claim \eqref{enu:thm-no-bdry-extremal-points}). It is then left
to show that $\partial\Omega\cap\Max=\emptyset$. As $\left.f\right|_{\Omega\backslash\partial M}<0$
by claim \eqref{enu:thm2-sec-3}, we conclude $\left.f\right|_{\overline{\Omega}}=\left.f\right|_{\overline{\Omega\backslash\partial M}}\leq0$.
Since $f$ is positive at its maxima points, being an eigenfunction,
we conclude that $\overline{\Omega}\cap\Max=\emptyset$ as required.
\end{proof}
\noindent In addition to the theorem above, one can make some straightforward
observations regarding Neumann lines which intersect with the boundary.
We first note that every critical point on the boundary is a saddle
point. In the vicinity of those saddle points the eigenfunction behaves
locally as it does in the neighborhood of a nodal line intersection
(which is like a harmonic polynomial, \cite{CH76}). The Neumann line
structure near those boundary saddle points can be deduced from lemma
\ref{lem:local_Neumann_set_near_saddle}. More explicitly, there are
two types of saddle points at the boundary. Those that are located
at corners of the manifold (each corner has such a saddle point) and
those which are located at a point on the smooth part of the boundary.
The former have a single Neumann line to which they are connected.
The latter are connected to two perpendicular Neumann lines and to
a nodal line which lies in between.
\begin{rem}
We briefly comment on surfaces with Neumann boundary conditions. The
flow in this case may be defined as in \eqref{eq:flow} and the boundary
of such a surface is naturally a union of gradient flow lines. Neumann
domains and Neumann lines are defined as for non-boundary surfaces
(definitions \ref{def:ND_using_CW_complex} and \ref{def:ND_using_saddles}).
Proposition \ref{prop:complementary_property_no_boundary} and all
claims of theorem \eqref{enu:thm-no-bdry-critical-points}, except
\eqref{enu:thm-no-bdry-Morse-Smale} hold.
\end{rem}

\section{Geometric and Spectral properties of Neumann domains\label{sec:geom_spec_properties}}

\noindent We have discussed so far the topological structure of Neumann
domains. We proceed by pointing out a connection between the aforementioned
results and geometric and spectral properties of Neumann domains.

\subsection{On the outer radius of Neumann domains\label{chap outer radius}}

The volume of nodal domains of an eigenfunction is bounded from below
in terms of the eigenvalue (by the Faber-Krahn inequality, \cite{Faber_23,Krahn_ma25}).
A lower bound on the Neumann domain volume does not exist. In particular,
there are continuous families of eigenfunctions of a fixed multiple
eigenvalue on the standard 2-torus which possess Neumann domains whose
volumes go to zero (cf.~\cite{McDFul_ptrs13}). However, the next
theorem provides a lower bound on the number of ``large-diameter''
Neumann domains.
\begin{defn}
\noindent Let $(M,g)$ be a two-dimensional Riemannian manifold without
boundary and $\Omega$ an open simply-connected subset of $M$. Let
$B_{r}(x_{0})$ denote a geodesic ball of radius $r$ around $x_{0}$.
Then we define the outer radius $R(\Omega)$, by
\begin{equation}
R(\Omega)=\inf\{r>0\,:\,\exists x_{0}\in M\,:\,\Omega\subset B_{r}(x_{0})\}.
\end{equation}
\end{defn}
\begin{thm}
\label{thm-lower_bound_on_outer_radius} Let $(M,g)$ be a two-dimensional
Riemannian manifold without boundary and $f$ a Morse-eigenfunction
of the Laplace-Beltrami operator with eigenvalue $\la$. Let $\nu$
denote the number of nodal domains of $f$. Then there exists a real
positive constant $C$ only depending on the metric $g$ such that
for at least $\left\lceil \nicefrac{\nu}{2}\right\rceil $ Neumann
domains $\{\Omega_{i}\}_{1\leq i\leq\left\lceil \nicefrac{\nu}{2}\right\rceil }$
of $f$
\begin{equation}
R(\Omega_{i})\geq C\la^{-1/2}.\label{eq:lower_bound_on_outer_radius}
\end{equation}
\end{thm}
\begin{proof}
This theorem follows from the structure of Neumann domains (theorem
\ref{thm:topological_properties_no_boundary}) and the bound on the
inner radius of nodal domains by Mangoubi \cite{Mangoubi_cmb08} (see
also \cite{NazPolSod_ajm05}). Each nodal domain $D$ of $f$ has
a global extremum of $\left.f\right|_{D}$. Each of these $\nu$ extrema
are members of the boundary of at least one Neumann domain each, by
theorem \ref{thm:topological_properties_no_boundary}. Let $q$ be
one of those maxima and $D_{q}$, the corresponding nodal domain of
$f$. By section 3 of \cite{Mangoubi_cmb08} there is a positive constant
$C'$ independent of $\la$ and a geodesic ball $B_{\nicefrac{C'}{\sqrt{\lambda}}}(q)$
such that $B_{\nicefrac{C'}{\sqrt{\lambda}}}(q)\subset D_{q}$. Let
$\Omega$ be a Neumann domain such that $q\in\partial\overline{\Omega}$
and let $p$ be the unique minimum on $\partial\Omega$ (not necessarily
a global minimum). Let $\gamma\left(p,q\right)$ be the geodesic ray
between $p$ and $q$ and $d\left(p,q\right)$ its length. As $f\left(p\right)<0,\,f\left(q\right)>0$
and by continuity of $f$, there exists $x\in\gamma\left(p,q\right)$
such that $f\left(x\right)=0$. We therefore get that
\begin{equation}
R\left(\Omega\right)\geq\frac{1}{2}d\left(p,q\right)\geq\frac{1}{2}d\left(x,q\right)\geq\frac{1}{2}C'\la^{-1/2}.\label{eq:bound_on_outer_radius}
\end{equation}
The argument above holds for each extremum which is global within
its nodal domain. Yet, as a Neumann domain may contain two such extrema
on its boundary, we deduce that \eqref{eq:bound_on_outer_radius}
holds for at least $\left\lceil \nicefrac{\nu}{2}\right\rceil $ of
the Neumann domains.\end{proof}
\begin{rem}
The number of Neumann domains for which the theorem holds, $\left\lceil \nicefrac{\nu}{2}\right\rceil $,
may be improved by studying the number of Neumann lines to which the
extremal points are connected. This number equals the number of Neumann
domains which share the same extremal point on their boundary and
we call it the valency of the extremal point (see also section \ref{sec:number_of_Neumann_domains}).
In addition, there are eigenfunctions for which all nodal domains
have a unique extremum. From the proof above we conclude that for
those eigenfunctions all Neumann domains obey \eqref{eq:lower_bound_on_outer_radius}.
Such eigenfunctions are given for example in \eqref{eq:explicit_eigenfunction}
and figure \ref{fig:Lense-like_and_star_like}.

There is no general upper bound on the outer radius of Neumann domains.
This can be demonstrated on the following family of separable eigenfunctions
on the unit torus,\\
 $\left\{ \cos\left(2\pi x\right)\cos\left(2\pi ny\right)\right\} _{n=1}^{\infty}$
(see figure \ref{fig:Lense-like_and_star_like}). All of those eigenfunctions
posses Neumann domains whose diameter equals $\nicefrac{1}{2}$.
\end{rem}

\subsection{On the restriction of eigenfunctions to Neumann domains\label{sec:lambda_2_conjecture}}

All Laplacian eigenfunctions have the following fundamental property
- their restriction to any nodal domain equals the first eigenfunction
of this domain. This has been used already in Pleijel's asymptotic
result for the nodal count \cite{Pleijel56}. It is therefore natural
to ask whether a similar statement holds for Neumann domains. The
restriction of an eigenfunction to one of its Neumann domains corresponds
to an eigenfunction with Neumann boundary conditions on that domain.
See also the discussion preceding proposition \ref{prop:unbounded_spectral_position}.
However, we provide here a counter-example, showing that the position
of the `global' eigenvalue in the spectrum of a single Neumann domain
is not always (i.e. for all manifolds) bounded from above. We remark
that the counter-example does not rule out that there are specific
classes of manifolds or domains $M$ for which there is an upper bound.\\

\begin{proof}
~[of proposition \ref{prop:unbounded_spectral_position}] Let $\mathbb{T}=[0,1]\times[0,1]$
be the unit flat torus with the corresponding Euclidean metric. Assume
by contradiction that there exists an $N\in\mathbb{N}$ such that
for all eigenfunctions $f$ and all Neumann domains $\Omega$ of $f$
\begin{equation}
\mathrm{pos}(\left.f\right|_{\Omega},\Omega)\leq N.
\end{equation}
By \cite{Kro_jfa92} we have the following bound
\begin{equation}
\forall\la,f,\Omega\,\,\,\,\,\lambda\leq\frac{8\pi N}{A\left(\Omega\right)},
\end{equation}
where $A\left(\Omega\right)$ denotes the area of the Neumann domain.
Summing $\lambda A\left(\Omega\right)\leq8\pi N$ over all Neumann
domains one gets $\lambda A\left(\mathbb{T}\right)\leq8\pi N\mu$
and we obtain that the number of Neumann domains $\mu$ obeys the
following lower bound
\begin{equation}
\mu\geq A\left(\mathbb{T}\right)\frac{\lambda}{8\pi N}.\label{eq:lower bound for Neumann count}
\end{equation}
\begin{figure}[htbp]
\centering{}\includegraphics[scale=0.18]{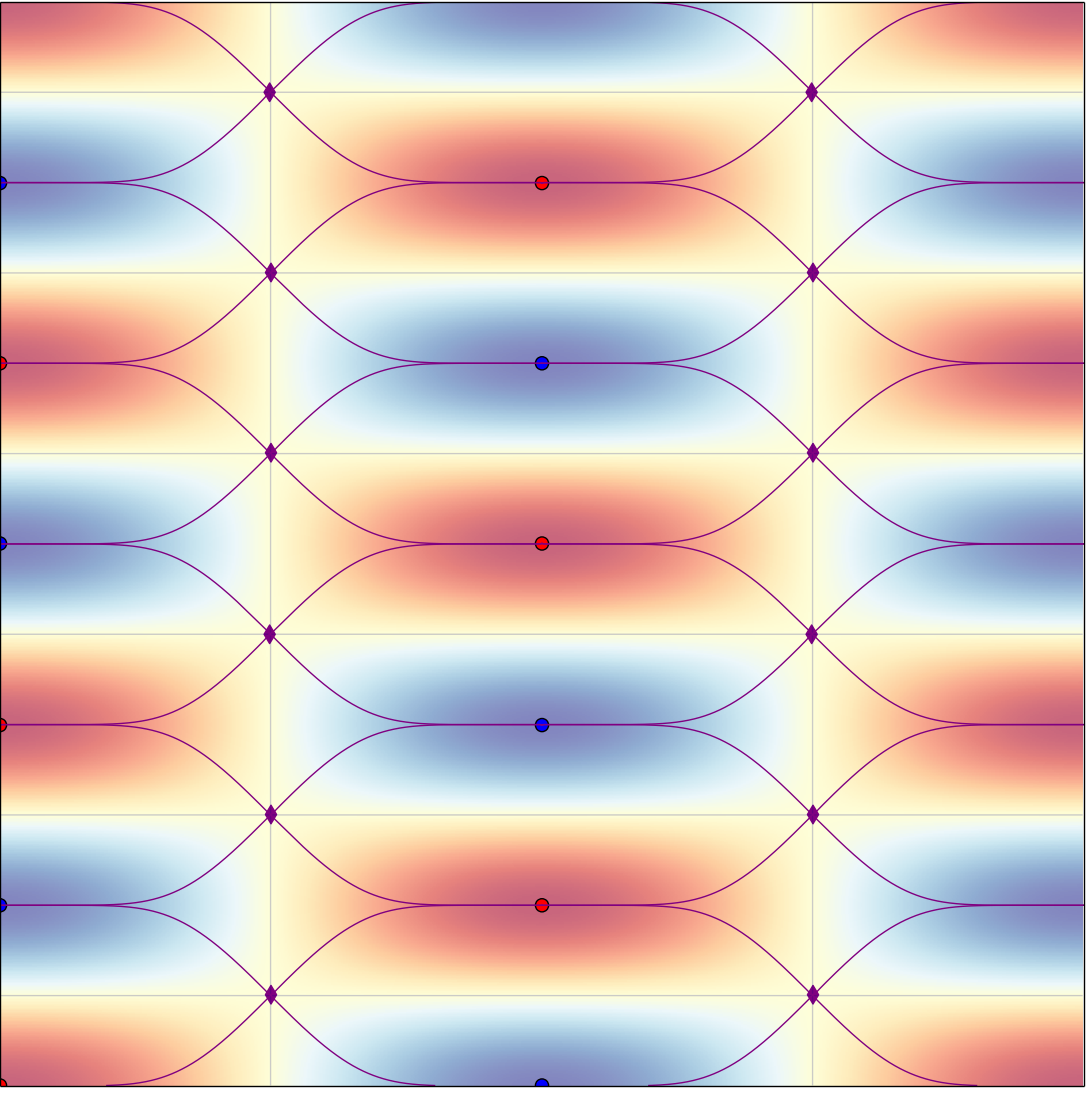}
\protect\caption{The Neumann lines of the unit torus eigenfunction $f\left(x,y\right)=\cos\left(2\pi x\right)\cos\left(6\pi y\right)$.
It is a member of infinite sequence of eigenfunctions which shows
that there is no uniform bound for $\mathrm{p}(\protect\la,f)$. \label{fig:Lense-like_and_star_like}}
\end{figure}

We now point on an example which contradicts the bound \eqref{eq:lower bound for Neumann count}.
Consider the following eigenfunction
\begin{equation}
f\left(x,y\right)=\cos\left(2\pi n_{x}x\right)\cos\left(2\pi n_{y}y\right),\label{eq:explicit_eigenfunction}
\end{equation}
with eigenvalue and number of Neumann domains
\begin{equation}
\begin{aligned}\lambda & =4\pi^{2}\left(n_{x}^{2}+n_{y}^{2}\right)\!,\\
\mu & =8n_{x}n_{y},
\end{aligned}
\end{equation}
respectively (see figure \ref{fig:Lense-like_and_star_like}). The
contradiction with \eqref{eq:lower bound for Neumann count} can be
easily seen if one chooses $n_{x}=1,\,n_{y}\gg1$.
\end{proof}
\noindent One may get an insight on the counter-example in the proof
above by investigating the shape of the Neumann domains obtained from
the choice $n_{x}=1,\,n_{y}\gg1$. The eigenfunction \eqref{eq:explicit_eigenfunction}
on the flat torus has Neumann domains of two distinguished shapes,
which we call lense-like, and star-like (figure \ref{fig:Lense-like_and_star_like})
. We show in the following that for a sufficiently large value of
$\nicefrac{n_{y}}{n_{x}}$ the eigenfunction restriction to a lense-like
Neumann domain does not equal the second eigenvalue of this domain.
\begin{lem}
\noindent \label{lem:counter_example_on_torus}Let $\mathbb{T}$ be
the unit flat 2-dim.\,torus and $f\left(x,y\right)=\cos\left(2\pi n_{x}x\right)\cos\left(2\pi n_{y}y\right)$
its eigenfunction with $n_{x},n_{y}\in\mathbb{Z}$. Let $\Omega$
be a lense-like Neumann domain of $f_{(n_{x},n_{y})}$. Then $\exists c>0$
such that $\nicefrac{n_{y}}{n_{x}}>c\,\,\,\Rightarrow\,\,\,\mbox{pos}\left(\left.f\right|_{\Omega},\,\Omega\right)>1$ \end{lem}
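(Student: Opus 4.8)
The plan is to show that for a lense-like Neumann domain $\Omega$ of the separable eigenfunction $f_{(n_x,n_y)}$ with $\nicefrac{n_y}{n_x}$ large, the restricted eigenfunction $\left.f\right|_\Omega$ sits strictly above the first non-constant position in the Neumann spectrum of $\Omega$. By theorem \ref{thm-1}\eqref{enu:thm-sec-2}, $\left.f\right|_{\overline\Omega}$ is not constant, so $\mbox{pos}\left(\la,\Omega\right)\geq 1$, and I only need to rule out equality. First I would pin down the explicit geometry of a lense-like domain. For the separable eigenfunction the gradient flow lines are the coordinate lines $x=\textrm{const}$ and $y=\textrm{const}$ through the critical points, so the Neumann lines form a rectangular grid and each lense-like Neumann domain is (up to translation) the rectangle $\left(0,\nicefrac{1}{2n_x}\right)\times\left(0,\nicefrac{1}{2n_y}\right)$, with $\left.f\right|_\Omega = \cos(2\pi n_x x)\cos(2\pi n_y y)$ a genuine Neumann eigenfunction there. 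Because the domain is an explicit rectangle, its Neumann spectrum is completely known: the eigenvalues are $\pi^2\left(4n_x^2 j^2 + 4n_y^2 k^2\right)$ for integers $j,k\geq 0$, with eigenfunctions $\cos(2\pi n_x j x)\cos(2\pi n_y k y)$.

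The key step is then a counting argument on this explicit rectangular spectrum. The restricted eigenfunction corresponds to the mode $(j,k)=(1,1)$ with eigenvalue $\la = 4\pi^2(n_x^2+n_y^2)$. To show $\mbox{pos}(\la,\Omega)>1$, I want to exhibit at least two distinct nonzero eigenvalues of the rectangle strictly below $\la$, so that $\la$ cannot be the first non-constant one. The natural candidates are the modes $(1,0)$ and $(0,1)$, with eigenvalues $4\pi^2 n_x^2$ and $4\pi^2 n_y^2$. I would argue that once $\nicefrac{n_y}{n_x} > c$ for a suitable $c$, at least two admissible modes of the form $(j,0)$ — namely the modes $(1,0),(2,0),\dots$ with eigenvalues $4\pi^2 n_x^2 m^2$ — fall strictly below $\la = 4\pi^2(n_x^2+n_y^2)$. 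Concretely, the number of such purely-$x$ modes below $\la$ is governed by how many integers $m$ satisfy $n_x^2 m^2 < n_x^2 + n_y^2$, i.e.\ $m < \sqrt{1 + \nicefrac{n_y^2}{n_x^2}}$, and this count exceeds one as soon as $\nicefrac{n_y}{n_x}$ is large enough (indeed $m=1,2$ both qualify once $\nicefrac{n_y}{n_x}>\sqrt3$). Since these give at least two distinct Neumann eigenvalues strictly below $\la$, the mode $(1,1)$ cannot occupy the first non-constant slot, giving $\mbox{pos}(\la,\Omega)>1$.

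The main obstacle I anticipate is not the spectral counting, which is elementary for the rectangle, but rather justifying rigorously that the lense-like Neumann domain really is the rectangle $\left(0,\nicefrac{1}{2n_x}\right)\times\left(0,\nicefrac{1}{2n_y}\right)$ and that $\left.f\right|_\Omega$ is exactly the $(1,1)$ Neumann eigenfunction on it. This requires checking that the Neumann lines of $f$ coincide with the coordinate grid described above, which follows from computing $\nabla f$ explicitly and identifying the saddle points and their stable/unstable manifolds for the separable function; one must also verify that the boundary of the lense-like cell consists entirely of such flow lines (Neumann lines) so that the Neumann boundary condition $\hat n\cdot\nabla f=0$ holds on $\partial\overline\Omega$, as noted in the discussion preceding theorem \ref{thm-1}. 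I would also need to take care to distinguish the lense-like cells from the star-like cells in the partition and confirm that the claimed rectangle is genuinely of lense type; a clean way to handle this is to note that in the separable case all Neumann domains are congruent rectangles, so the distinction is purely a matter of position relative to the nodal lines and does not affect the spectral computation. Once the geometry is fixed, the remainder is a direct comparison of eigenvalues on an explicit rectangle, and I would choose $c=\sqrt3$ (or any convenient constant guaranteeing two sub-$\la$ modes) to conclude.
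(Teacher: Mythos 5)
Your identification of the lense-like Neumann domain as the rectangle $\left(0,\nicefrac{1}{2n_x}\right)\times\left(0,\nicefrac{1}{2n_y}\right)$ is false, and this is exactly the step you yourself flagged as ``the main obstacle'': the check you deferred fails. Compute the critical points: the extrema of $f_{(n_x,n_y)}$ sit at $\sin(2\pi n_x x)=\sin(2\pi n_y y)=0$ (the corners of your rectangles), while the saddles sit at $\cos(2\pi n_x x)=\cos(2\pi n_y y)=0$, i.e.\ at the \emph{centres} of your rectangles. At such a saddle the Hessian is purely off-diagonal ($f_{xx}=f_{yy}=0$, $f_{xy}=\pm4\pi^{2}n_xn_y$), so the stable and unstable manifolds leave the saddle along the diagonal directions $(1,\pm1)$; integrating $\nicefrac{dy}{dx}=\nicefrac{\partial_y f}{\partial_x f}$ shows the Neumann lines are the curved arcs
\begin{equation}
\sin\left(2\pi n_y y\right)=\pm\left[\sin\left(2\pi n_x x\right)\right]^{\left(n_y/n_x\right)^{2}},
\end{equation}
which are straight only when $n_x=n_y$. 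The coordinate lines through the extrema are indeed invariant flow lines, but they connect maxima directly to minima and belong to no saddle's stable or unstable manifold, so they are \emph{not} Neumann lines; they lie in the interiors of Neumann domains. Equivalently: each of your rectangles contains a saddle in its interior, which by theorem \ref{thm-1}\eqref{enu:thm-sec-1} is impossible for a Neumann domain. The true Neumann domains are the curvilinear lens- and star-shaped cells of figure \ref{fig:Lense-like_and_star_like}; if all Neumann domains were ``congruent rectangles'' the lemma's distinction between the two shapes would be vacuous.

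The error is fatal because the explicit rectangular spectrum on which your counting rests is unavailable for the actual lens domain, whose Neumann spectrum is not computable in closed form. A further red flag you should have caught: your argument yields $\mathrm{pos}(\la,\Omega)\geq3$ for \emph{every} $n_x,n_y$ (the modes $(1,0)$ and $(0,1)$ always lie below $(1,1)$), so the hypothesis $\nicefrac{n_y}{n_x}>c$ is never used — yet it is essential. The paper's proof runs instead as follows: the lens is convex, with major axis $\ell_x=\nicefrac{1}{2n_x}$ and minor axis $\ell_y=\nicefrac{1}{2n_y}$, so theorem 1.2(a) of \cite{JE00} applies and confines the nodal set of the \emph{second} Neumann eigenfunction of $\Omega$ to a transverse strip of width $2C\ell_y$ about the centre; on the other hand the nodal set of $\left.f_{(n_x,n_y)}\right|_{\Omega}$ is the horizontal segment running along the entire major axis (joining the two boundary saddles), so once $\nicefrac{n_y}{n_x}$ is large it cannot fit in that strip, and hence $\left.f_{(n_x,n_y)}\right|_{\Omega}$ is not the second eigenfunction. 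The eccentricity hypothesis is precisely what makes Jerison's strip narrower than the nodal line is long; any correct proof must use it in some form.
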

\begin{proof}
The major and minor axes of the lense-like Neumann domain, $\Omega$,
are of lengths $\ell_{x}=\nicefrac{1}{2n_{x}},\,\ell_{y}=\nicefrac{1}{2n_{y}}$.
This Neumann domain is convex and we may apply theorem 1.2(a) of \cite{JE00}.
According to that theorem, for a fixed value of $\ell_{x}$ (obeying
$\ell_{x}>\ell_{y}$) there is a constant $C$ ($\ell_{x}$ dependent)
such that the nodal set of the second eigenfunction is contained within
a vertical strip of width $2C\ell_{y}$ around the center of $\Omega$.
Namely, if $\varphi$ is the second eigenfunction then $\varphi\left(x,y\right)=0\,\,\Rightarrow\,\,\left|x\right|<C\ell_{y}$,
with the origin taken at the center of $\Omega$. Since in our case,
the nodal set of $\left.f\right|_{\Omega}$ is horizontal along $\Omega$
(see figure \eqref{fig:Lense-like_and_star_like}), we conclude that
for small enough value of $\ell_{y}$ the nodal set will not belong
to the allowed strip and therefore $\left.f\right|_{\Omega}$ cannot
be the second eigenfunction of $\Omega$.
\end{proof}

\section{the number of Neumann domains\label{sec:number_of_Neumann_domains}}

In this section we denote the number of Neumann domains by $\mu$
and the number of nodal domains by $\nu$. If the manifold has boundary,
we denote by $\mu^{\textrm{in}},\mu^{\textrm{bd}}$ the number of
inner and boundary Neumann domains, respectively and have $\mu=\mu^{\textrm{in}}+\mu^{\textrm{bd}}$.
For manifolds without boundary we already stated in corollary \ref{cor:Neumann_count_bdd_by_nodal_count},
which we prove below, that the number of Neumann domains is bounded
from below by half the number of nodal domains.\\

\begin{proof}
~[of corollary \ref{cor:Neumann_count_bdd_by_nodal_count}] Applying
theorem \ref{thm:topological_properties_no_boundary}\eqref{enu:thm-no-bdry-level-sets-3}
with $c=0$, we have that a Neumann domain contains only a single
non self-intersecting nodal line. Hence, each Neumann domain intersects
with exactly two nodal domains. It is possible that different Neumann
domains intersect with the same nodal domain. The number of nodal
domains is therefore bounded from above by $2\mu$.
\end{proof}
The same bound holds also for manifolds with boundary. Furthermore,
it may be slightly improved if separating between the count of boundary
and inner Neumann domains.\\

\begin{cor}
{[}of theorem \ref{thm-2}{]} \label{cor:Neumann_count_bdd_by_nodal_count-for_boundary}

Let $(M,g)$ be as in theorem \ref{thm-2} and $f$ a Morse eigenfunction
on $M$. Then $2\mu^{\textrm{in}}+\mu^{\textrm{bd}}\geq\nu$ and $2\mu\geq\nu$.\\
\end{cor}
\begin{proof}
~[of corollary \ref{cor:Neumann_count_bdd_by_nodal_count-for_boundary}]
From theorem \ref{thm-2}\eqref{enu:thm2-referring_to_thm1_2} we
deduce as in the proof of corollary \ref{cor:Neumann_count_bdd_by_nodal_count}
that inner Neumann domains intersect with exactly two nodal domains.
From theorem \ref{thm-2}\eqref{enu:thm2-sec-3} we deduce that boundary
Neumann domains intersect with a single nodal domain. Different Neumann
domains may intersect with the same nodal domain and hence $2\mu^{\textrm{in}}+\mu^{\textrm{bd}}\geq\nu$.
We also get $2\mu=2\mu^{\textrm{in}}+2\mu^{\textrm{bd}}\geq\nu$.
\end{proof}
\noindent The number of Neumann domains may also be studied by examining
the graph structure of the Neumann line set. The vertices of such
a graph are the critical points and the edges are the Neumann lines
connecting them. It is then natural to define the \emph{valency of
a critical point}, $\textrm{val}\left(x\right)$, as the number of
Neumann lines which are connected to $x$. The following discussion
is restricted for the case of manifolds without boundary. Combining
Euler's formula and Morse inequalities we get
\begin{equation}
\chi\left(M\right)=V-E+F=\left|\Min\right|-\left|\Sd\right|+\left|\Max\right|,\label{eq:Euler_formula}
\end{equation}
where $V,\,E,\,F$ are correspondingly the numbers of vertices, edges
and faces of the graph. The number of vertices is
\begin{equation}
V=\left|\Min\right|+\left|\Sd\right|+\left|\Max\right|\label{eq:number_of_vertices}
\end{equation}
and the number of edges obeys
\begin{equation}
E\leq4\left|\Sd\right|,\label{eq:upper_bound_on_edge_number}
\end{equation}
as at least one endpoint of each edge is a saddle point and all saddles
are of valency four. The faces correspond to Neumann domains, $F=\mu$,
and we therefore get that their number obeys
\begin{equation}
\mu\leq2\left|\Sd\right|.\label{eq:upper_bound_for_Neumann_count}
\end{equation}
If we further assume a Morse-Smale function we get equalities in both
\eqref{eq:upper_bound_on_edge_number} and \eqref{eq:upper_bound_for_Neumann_count}.
We now wish to obtain a lower bound on the number of Neumann domains.
Observe that
\begin{align}
E & =\frac{1}{2}\sum_{x\in\Cr}\textrm{val}\left(x\right)=\frac{1}{2}\left(4\left|\Sd\right|+\sum_{p\in\Xt}\textrm{val}\left(p\right)\right),\label{eq:lower_bound_on_edge_number}
\end{align}
where we used that saddles are of valency four. Plugging \eqref{eq:number_of_vertices},
\eqref{eq:lower_bound_on_edge_number} and $F=\mu$ in \eqref{eq:Euler_formula}
we get
\begin{align}
\mu\geq\frac{1}{2}\left|\Xt\right| & =\frac{1}{2}\sum_{p\in\Xt}\textrm{val}\left(p\right)=\frac{1}{2}\chi\left(M\right)+\frac{1}{2}\left|\Sd\right|.\label{eq:lower_bd_on_Neumann_count_in_terms_of_saddles}
\end{align}

We now assume that the manifold has boundary and is equipped with
Dirichlet boundary conditions (as in section \ref{sec:with_boundary}).
We note that each inner Neumann domain has a single minimum and a
single maximum on its boundary (theorem \ref{thm:topological_properties_no_boundary}\eqref{enu:thm-no-bdry-extremal-points})
and each boundary Neumann domain has either a minimum or a maximum
on its boundary (theorem \ref{thm-2}\eqref{enu:thm2-sec-1}). The
valency of a critical point equals the number of Neumann domains whose
boundary contain this critical point. We therefore obtain for manifolds
with boundary
\begin{equation}
2\mu^{\textrm{in}}+\mu^{\textrm{bd}}=\sum_{p\in\Xt}\deg\left(p\right),\label{eq:}
\end{equation}
which leads to
\begin{equation}
\frac{1}{2}\left|\Xt\right|\leq\frac{1}{2}\sum_{p\in\Xt}\deg\left(p\right)\leq\mu\leq\sum_{p\in\Xt}\deg\left(p\right)\leq4\left|\Sd\right|,\label{eq:-1}
\end{equation}
where the right inequality holds as each Neumann line emanating from
an extremum ends at a saddle point and each saddle point is connected
by Neumann lines to at most four different extremal points. The relations
above motivate the study of the extremal points valencies even if
just in the distributional sense.

\noindent Finally, let us discuss the asymptotics of the Neumann domain
count. The existence of subsequences of eigenfunctions whose nodal
count goes to infinity was recently proved \cite{GhoRezSar_gafa13,JunZel_arxiv13,JunZel_arxiv14}.
In \cite{GhoRezSar_gafa13} it was done for the arithmetic case and
in \cite{JunZel_arxiv13,JunZel_arxiv14} it was shown for a class
of non positively curved manifolds. From corollary \ref{cor:Neumann_count_bdd_by_nodal_count},
we conclude that in these cases there exists a subsequence of eigenfunctions
whose Neumann domain count goes to infinity as well. Furthermore,
numerical experiments suggest that the number of Neumann domains goes
to infinity as $\la\rightarrow\infty$. This is the case even for
sequences of eigenfunctions for which the number of nodal domains
is bounded (see for example figure \ref{fig:Stern-Example} which
describes the well-known example by Stern given in \cite{CourantHilbert_volume1}).
However, the statement above does not hold for all metrics. There
are known examples of metrics on the torus constructed by Jakobson
and Nadirashvili \cite{JakNad_jdg99}, which have subsequences of
eigenfunctions corresponding to eigenvalues $\la\rightarrow\infty$
with uniformly bounded number of critical points. As the saddle points
in this example are non-degenerate ones \cite{Jakobson_private},
the boundedness of number of saddles implies by \eqref{eq:upper_bound_for_Neumann_count}
that the number of Neumann domains for these subsequences is also
uniformly bounded. In other words, eigenfunctions corresponding to
arbitrarily high eigenvalues might have a small number of Neumann
domains.

\begin{figure}[htbp]
\centering{}\includegraphics[scale=0.18]{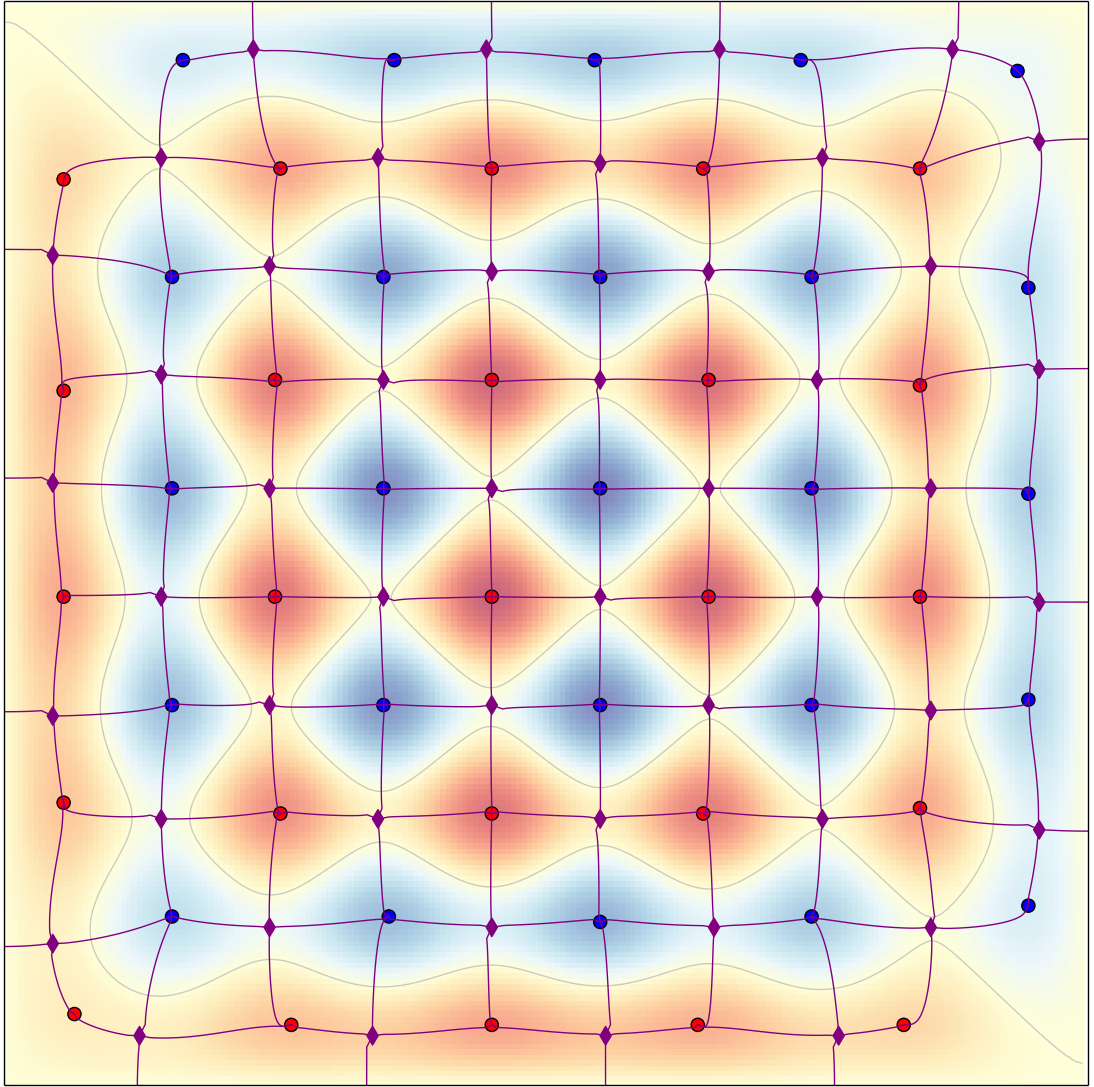}
\protect\caption{The Neumann lines for the eigenfunction $\sin(2rx)\sin\left(y\right)+\mu\sin(2rx)\sin\left(y\right)$
with $r=5,\,\mu\approx1$ on a square of edge size $\pi$ and Dirichlet
boundary condition. It belongs to a family of eigenfunctions with
only two nodal domains, but with number of Neumann domains which is
proportional to $r^{2}$. Cf.\,the example in page 396 of \cite{CourantHilbert_volume1}.\label{fig:Stern-Example}}
\end{figure}

\section{Summary}

\noindent This paper studies Laplacian eigenfunctions on surfaces
by investigating their Neumann domains. Given an eigenfunction, we
define Neumann lines and Neumann domains and show that they form a
partition of the manifold. Furthermore, we claim that this partition
is as natural as the partition dictated by the nodal set. However,
numerous essential questions that are being investigated for nodal
domains are open for Neumann domains. The current paper develops this
study by discussing and answering some of those questions.

\noindent Let us specify some points of comparison between Neumann
domains and nodal domains. From a topological point of view, Neumann
domains are simply connected (theorems \ref{thm:topological_properties_no_boundary}\eqref{enu:thm-no-bdry-simply-connected}
and \ref{thm-2}\eqref{enu:thm2-simply-connected}), whereas nodal
domains are not in general \cite{NazSod_ajm09,SarWig_arXiv13}. The
simplicity of the Neumann partition is also apparent in the eigenfunction
restriction to a Neumann domain, $\left.f\right|_{\Omega}$. Theorems
\ref{thm:topological_properties_no_boundary} and \ref{thm-2} show
that the structure of $\left.f\right|_{\Omega}$ cannot be too complex
in terms of the position and number of critical points and the nodal
set within $\left.f\right|_{\Omega}$. As $\left.f\right|_{\Omega}$
is also an eigenfunction of the domain $\Omega$ with Neumann boundary
conditions, its structural simplicity suggests that the position of
$\left.f\right|_{\Omega}$ in the spectrum of $\Omega$ cannot be
too high. A similar question for nodal domains is easy to answer and
for each nodal domain $D$ of $f$, it is known that $\left.f\right|_{D}$
is the first Dirichlet eigenfunction of $D$. This observation was
used by Pleijel \cite{Pleijel56} to obtain an asymptotic bound on
the nodal domain count. Similarly, answering the analogous question
for Neumann domains would help in estimating the number of Neumann
domains, as is discussed in section \ref{sec:lambda_2_conjecture}.
In particular, we already show that the number of Neumann domains
is bounded from below by half the number of nodal domains (corollary
\ref{cor:Neumann_count_bdd_by_nodal_count}) for the types of manifolds
we consider. \\
 It is well known that the number of nodal domains is affected by
the stability of the nodal set. This is apparent for example in the
case of multiple eigenvalues. Such eigenvalues may posses eigenfunctions
where nodal lines intersect. Perturbations of these eigenfunctions
may prevent these crossings and the intersecting lines resolve into
two separate nodal lines. The nature of this resolution of the intersection
crucially affects the topology and number of nodal domains and makes
their counting a difficult task. Neumann domains, however, show a
different behavior. A crossing of nodal lines always occurs at a saddle
point of the function and therefore it also coincides with a Neumann
line intersection. Such a Neumann line crossing is stable with respect
to perturbations and thus there is no change in the number of Neumann
domains when the eigenfunction is perturbed. This was already observed
in \cite{McDFul_ptrs13} and it was suggested that the Neumann line
pattern is relatively robust and hence the relative ease (in comparison
with nodal domains) of the Neumann domain count. Yet, there is an
additional phenomenon which complicates the count of Neumann domains.
Considering a multiple eigenvalue and some non Morse-Smale eigenfunction
which belongs to it, a perturbation might cause an appearance of a
new Neumann domain. Such a domain appears at the Neumann line which
connects some two saddle points and its volume may be arbitrarily
small. The purpose of theorem \ref{thm-lower_bound_on_outer_radius}
is to place a restriction on the number of such shrinking domains,
by providing a lower bound on the outer radius of some of the Neumann
domains.

\noindent Finally, we wish to point out open problems and possible
exploration directions for the study of Neumann domains. In the following
$M$ denotes a two dimensional compact manifold with or without boundary,
$\left(\lambda,f\right)$ denotes an eigenpair of the Laplacian on
$M$ and $\Omega$ is some Neumann domain of $f$.
\begin{enumerate}
\item Let $M$ be a 2-dimensional surface. For a Morse-eigenfunction, $f$,
of $M$, denote (see also proposition \ref{prop:unbounded_spectral_position}
and the preceding discussion)
\[
\textrm{p}\left(f\right):=\max_{\Omega}\left\{ \left.\mathrm{pos}\left(\left.f\right|_{\Omega},\Omega\right)~\right|\,\Omega\mbox{ is a Neumann domain of \ensuremath{f}}\right\} .
\]
What conditions on $M$ does one need to assure that $\left\{ \left.\textrm{p}\left(f\right)\right|~f~\textrm{is an eigenfunction}\right\} $
is either bounded or possesses a bounded subsequence? Such boundedness
imposes lower bounds and asymptotic results for the Neumann domain
count (see proof of proposition \ref{prop:unbounded_spectral_position}).
\item What are the asymptotics of the Neumann domain count? More specifically,
does the limit of $\left\{ \nicefrac{\mu_{n}}{n}\right\} _{n=1}^{\infty}$
exist, in general or for some classes of manifolds? If so, could it
be bounded from below?\\
 An easier task would be to bound $\liminf_{n\rightarrow\infty}\frac{\mu_{n}}{n}$
from below. Is it possible to obtain a Courant-like bound? Namely,
obtain an upper bound of the form $\mu_{n}\leq h\left(n\right)$,
with $h$ being some function (possibly linear).
\item Improving the inequality established in corollary \ref{cor:Neumann_count_bdd_by_nodal_count}
between the nodal count and the Neumann domain count or the lower
bound \eqref{eq:lower_bd_on_Neumann_count_in_terms_of_saddles} on
the Neumann domain count. This can be done, for example, by bounding
from below the valencies of extremal points (see discussion in section
\ref{sec:number_of_Neumann_domains}).
\item Bounding the total length of the Neumann line set in terms of the
eigenvalue.
\item Providing a global upper bound for the volume of a single Neumann
domain in terms of the eigenvalue.
\item Is it possible to improve the lower bound on the outer radius of a
Neumann domain in theorem \ref{thm-lower_bound_on_outer_radius}?
The main improvement might be to make this bound global, so that it
applies to all Neumann domains of the eigenfunction.
\item Provide an upper bound on the inner radius of a Neumann domain.
\end{enumerate}

\section{Acknowledgments}

\noindent We are grateful to Thomas Hoffmann-Ostenhof for several
enlightening discussions during the course of this work. We thank
those who listened to our ideas and helped us to sharpen them: Robert
Adler, Gregory Berkolaiko, Michael Berry, Mark Dennis, Stephen Fulling,
Dmitry Jakobson, Peter Kuchment, Dan Mangoubi, Uzy Smilansky, Mikhail
Sodin and Steve Zelditch. A special thanks goes to Tobias Hartnick
for the idea of proof in theorem \ref{thm:topological_properties_no_boundary}\eqref{enu:thm-no-bdry-simply-connected}
and to Michael Levitin for interesting discussions on the first problem
in our list of open questions. Alexander Taylor is warmly acknowledged
for providing the code with which the figures were generated. Finally,
we thank the referee for the careful reading of the manuscript and
their useful suggestions. R.B. was supported by ISF (grant No. 494/14),
Marie Curie Actions (grant No. PCIG13-GA-2013-618468) and the Taub
Foundations (Taub Fellow). D.F.~thanks the mathematics department
of the Technion for their hospitality.

\bibliographystyle{plain}
\bibliography{Morse_Theory_etal,Nodal_Domains_etal,Spectral-misc}
 \vspace{0.5cm}
 \textsc{Ram Band} (corresponding author)\\
 \textsc{Department of Mathematics, Technion - Israel Institute of
Technology, Haifa 32000, Israel}\\
 \texttt{ramband@technion.ac.il}\\
 \vspace{0.05cm}
\\
 \textsc{David Fajman}\\
 \textsc{University of Vienna}\\
 \texttt{David.Fajman@univie.ac.at}\\

\end{document}